\theoremstyle{plain}
\newtheorem{maintheorem}{Theorem}
\newtheorem{theorem}{Theorem}[section]
\newtheorem{lemma}[theorem]{Lemma}
\newtheorem{proposition}[theorem]{Proposition}
\theoremstyle{remark}
\theoremstyle{definition}
\newtheorem{definition}[theorem]{Definition}
\newcommand{\R}{\mathbb{R}}
\newcommand{\Gammatil}{\widetilde{\Gamma}}
\newcommand{\Mtil}{\widetilde{M}}
\newcommand{\htil}{\widetilde{h}}
\newcommand{\hprimetil}{\widetilde{h'}}
\DeclareMathOperator{\vol}{Vol}
\DeclareMathOperator{\area}{Area}
\DeclareMathOperator{\length}{length}
\DeclareMathOperator{\Mindeg}{Mindeg}
\DeclareMathOperator{\sys}{sys}
\DeclareMathOperator{\degree}{deg}
\DeclareMathOperator{\tens}{tens}
\DeclareMathOperator{\Ent}{Ent}
\DeclareMathOperator{\dist}{dist}
\numberwithin{equation}{section}
\title{Growth of balls in the universal cover of surfaces and graphs.}
\author{Steve Karam}
\begin{document}

\begin{abstract}
In this paper,  we prove  uniform lower bounds on the  volume growth of balls in the
universal covers of Riemannian surfaces and graphs.
More precisely, there exists a  constant $\delta>0$ such that 
if $(M,hyp)$ is a closed hyperbolic surface and $h$ another metric
on $M$ with $\area(M,h)\leq \delta \area(M,hyp)$
then for every radius $R\geq 1$ the universal cover of $(M,h)$
contains an $R$-ball with area at  least the area of an $R$-ball  in the hyperbolic plane. 
This  positively answers  a question of L. Guth for surfaces. 
We also prove an analog theorem for graphs.
\end{abstract}

\address{Steve Karam, Laboratoire de Math\'ematiques et de Physique Th\'eorique,
UFR Sciences et Technologie, Universit\'e Fran\c cois Rabelais, Parc de Grandmont,
37200 Tours, France} \email{steve.karam@lmpt.univ-tours.fr}

\maketitle

\section{Introduction}\label{secint}

Let  $(\Mtil,\htil)$ be the universal cover of a closed Riemannian manifold $(M,h)$.
We consider the function 
$$V_{(\Mtil,\htil)}(R):= \displaystyle\sup\limits_{\tilde{x}\in \Mtil} \vol B_{\htil}(\tilde{x},R).$$
The function $V(R)$ is the largest volume of any ball of radius $R$ in $(\Mtil,\htil)$.
Since it is possible to construct examples of Riemannian manifolds where 
the volume of some balls of radius $R$ in the universal cover 
is arbitrary small, it is interesting to know whether there is at least one ball of radius~$R$ 
in the universal cover with a large volume.
If the curvature of the metric~$h$ is bounded above by a negative constant 
then the Bishop-Gunther-Gromov inequality gives us an exponential lower bound on the 
volume of all balls in the universal cover $\Mtil$. So in particular we have an estimate of the function $V$.
In this paper, we are interested in finding  curvature-free exponential lower bounds for  $V$. 
We replace the local assumption, namely  a curvature bound, 
by a topological assumption  and  a condition on the volume of $(M,h)$.
What is believed is that if the topology of $M$ is complicated then the 
function $V$ is large (see \cite{G2} and \cite{G11} for more details). 
\\ \indent 
Before going further, we would like to point out that the function $V_{(\Mtil,\htil)}$ is related 
to the volume entropy of $(M,h)$. The volume entropy of $(M,h)$ is defined as 
$$\Ent(M,h)=\displaystyle\lim\limits_{R\to+\infty} \log(Vol (B_{\htil}(\tilde{x},R))).$$
Since $M$ is compact, the limit exists and does not depend on the point $\tilde{x}$ (see \cite{M}).
The volume entropy is a way of describing the asymptotic behavior 
of the volumes of  balls in the universal cover of a given Riemannian manifold.
\\

An example of a manifold with "complicated topology" is a manifold of hyperbolic type, 
i.e., a manifold which admits a hyperbolic 
Riemannian metric. Let $(M^n, hyp)$ be a closed hyperbolic manifold.
The volume of a  ball in the  hyperbolic space $\mathbb{H}^n$, i.e., the universal cover of $(M^n, hyp)$,
is independent of the center of the ball.
Thus $V_{\mathbb{H}^n}(R)$ is just the volume of any ball of radius $R$ 
in the hyperbolic $n$-space, which can be explicitly calculated. 
In particular, when $n=2$, for every $R>0$ we have  
\begin{eqnarray}
V_{\mathbb{H}^2}(R)= 2\pi(\cosh(R)-1). 
\end{eqnarray}
So there exists a constant $c$ such that 
$$V_{\mathbb{H}^2}(R)\sim ce^{R},$$
when $R$ goes to infinity.
\\ \indent
Now let $h$ be another metric on $M$ with $\vol(M,h) \leq \vol(M,hyp)$. 
Does the balls in $(\Mtil,\htil)$ also grow exponentially like in the hyperbolic case?
There exist  two fundamental theorems in this direction.
The first theorem is due to G. Besson, G. Courtois, S. Gallot \cite{BCG} and also to A. Katok \cite{K}
for the dimension $n=2$. 
The authors proved that if $M$ is a closed connected Riemannian manifold that carries 
a rank one locally symmetric metric $h_0$, then for every Riemannian metric $h$ such that 
$\vol(M,h)=\vol(M,h_0)$, the inequality $\Ent(M,h)\geq \Ent(M,h_0)$ holds.
In our language their theorem can be expressed as follows.

\begin{theorem}[see  \cite{BCG}, \cite{K}]\label{thmBessonCourtoisGallot}
Let $(M^n, hyp)$ be a closed hyperbolic manifold, and let
$h$ be another metric on $M$ with $\vol(M,h) < \vol(M,hyp)$.  
Then there is some constant $R_0$ (depending on the metric $h$)
such that for every radius $R > R_0$,  the following inequality
holds:
$$V_{(\Mtil, \htil)} (R) > V_{\mathbb{H}^n}(R).$$
\end{theorem}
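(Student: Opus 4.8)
The plan is to derive the theorem from the entropy--volume inequality of Besson--Courtois--Gallot (and of Katok for $n=2$) recalled above, combined with the definition of the volume entropy and the explicit growth of $V_{\mathbb{H}^n}$.

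First I would promote the hypothesis $\vol(M,h)<\vol(M,hyp)$ to a strict inequality of entropies. Since the version of the theorem stated above compares metrics of equal volume, I would rescale $h$: put $\lambda:=\bigl(\vol(M,hyp)/\vol(M,h)\bigr)^{1/n}>1$ and $h'=\lambda^{2}h$, so that $\vol(M,h')=\lambda^{n}\vol(M,h)=\vol(M,hyp)$. The theorem then yields $\Ent(M,h')\geq\Ent(M,hyp)=n-1$. Since rescaling a metric by $\lambda^{2}$ multiplies all distances by $\lambda$, one has $\Ent(M,h')=\tfrac{1}{\lambda}\Ent(M,h)$, hence $\Ent(M,h)\geq\lambda(n-1)>n-1$. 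Set $\varepsilon:=\tfrac{1}{2}\bigl(\Ent(M,h)-(n-1)\bigr)>0$.

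Next I would use that, for a fixed $\tilde{x}\in\Mtil$, $\Ent(M,h)=\lim_{R\to+\infty}\tfrac{1}{R}\log\vol B_{\htil}(\tilde{x},R)$ (the limit existing since $M$ is compact). Thus there is $R_{1}>0$ with $\vol B_{\htil}(\tilde{x},R)>e^{(n-1+\varepsilon)R}$ for every $R>R_{1}$, and in particular $V_{(\Mtil,\htil)}(R)\geq\vol B_{\htil}(\tilde{x},R)>e^{(n-1+\varepsilon)R}$. On the other side, computing $V_{\mathbb{H}^{n}}(R)$ in geodesic polar coordinates and using $\sinh t\leq\tfrac{1}{2}e^{t}$ gives a bound $V_{\mathbb{H}^{n}}(R)\leq C_{n}e^{(n-1)R}$ with $C_{n}$ depending only on $n$ (for $n=2$ this is just $2\pi(\cosh R-1)\leq 2\pi e^{R}$). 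Comparing the two estimates, $V_{(\Mtil,\htil)}(R)>V_{\mathbb{H}^{n}}(R)$ as soon as $e^{\varepsilon R}>C_{n}$, so the theorem holds with $R_{0}:=\max\{R_{1},\,\varepsilon^{-1}\log C_{n}\}$, a constant depending on $h$ through $\lambda$, $\varepsilon$ and the convergence rate $R_{1}$.

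The whole difficulty is concentrated in the ingredient we borrow: the sharp inequality $\Ent(M,h)^{n}\vol(M,h)\geq(n-1)^{n}\vol(M,hyp)$ is the deep theorem of \cite{BCG} (and \cite{K} in dimension two), which I would invoke as a black box. The only genuine defect of this argument is that $R_{0}$ cannot be taken uniform in $h$: as $\vol(M,h)\uparrow\vol(M,hyp)$ one has $\lambda\downarrow 1$, $\varepsilon\downarrow 0$, and $R_{0}\to+\infty$. Removing this dependence for surfaces, at the price of strengthening the hypothesis on the area, is exactly the goal of the present paper.
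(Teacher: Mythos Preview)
Your derivation is correct. The paper itself does not give a proof of this statement: it is presented as a reformulation of the Besson--Courtois--Gallot (and Katok) entropy inequality, with citations to \cite{BCG} and \cite{K}, and no argument is supplied beyond the sentence ``In our language their theorem can be expressed as follows.'' What you have written is precisely the standard translation: use the scale-invariant inequality $\Ent(M,h)^{n}\vol(M,h)\geq (n-1)^{n}\vol(M,hyp)$ (or, equivalently, normalize volumes and apply the equal-volume version) to obtain $\Ent(M,h)>n-1$, then compare the exponential growth rate of balls in $(\Mtil,\htil)$, which exceeds $n-1$, with the exact rate $n-1$ for $V_{\mathbb{H}^{n}}$. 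Your closing remark that $R_{0}\to+\infty$ as $\vol(M,h)\uparrow\vol(M,hyp)$ is also the point the paper is making when it says ``It would be interesting to know the value of $R_0$.''
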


\vspace{5mm}

It would be interesting to know  the value of $R_0$ in Theorem \ref{thmBessonCourtoisGallot}
since we are looking for a lower bound on the function $V_{(\Mtil, \htil)}$ for every $R\geq 0$. 
\\ 
The second fundamental theorem  can be seen  as a first step toward 
estimating $R_0$ but with a stronger hypothesis.
\\

\begin{theorem}[Guth, \cite{G11}]\label{thmGuth}
For every dimension $n$, there is a number
\linebreak
$\delta(n)>0$ such that if $(M^n, hyp)$ is a closed hyperbolic $n$-manifold and  $h$
is another metric on $M$ with  $\vol(M,h) < \delta(n)
\vol(M,hyp)$,  then  the following inequality holds
$$V_{(\Mtil, \htil)}(1) > V_{\mathbb{H}^n}(1).$$
\end{theorem}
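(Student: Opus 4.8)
The plan is to argue by contraposition: assuming $V_{(\Mtil,\htil)}(1)\le V_{\mathbb{H}^n}(1)$, I would try to produce a dimensional constant $C(n)$ with $\vol(M,hyp)\le C(n)\,\vol(M,h)$, and then set $\delta(n)=1/C(n)$. Since $M$ carries a hyperbolic metric, the Gromov--Thurston theorem gives $\vol(M,hyp)=v_n\|M\|$, where $v_n$ is the volume of the regular ideal $n$-simplex and $\|M\|$ the simplicial volume; so it is enough to bound $\|M\|\le C'(n)\,\vol(M,h)$ under the standing assumption on $V$. In this form the hypothesis $\vol(M,h)<\delta(n)\vol(M,hyp)$ just says that the volume of $(M,h)$ is small compared with $\|M\|$.

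The heart of the argument should be a covering estimate on the universal cover. Using that every unit ball of $(\Mtil,\htil)$ has volume at most $V_{\mathbb{H}^n}(1)$, together with the fact that $\vol(M,h)$ is small relative to $\|M\|$, I would build a $\pi_1(M)$-equivariant open cover $\{U_i\}$ of $\Mtil$ by sets of diameter $\le 1$ whose multiplicity is at most $n$ and which has only finitely many $\pi_1(M)$-orbits. The construction should be a multiscale Vitali selection of balls of varying radii, where the volume bound on balls is precisely what prevents any point from lying in more than $n$ of the chosen sets. The nerve $\mathcal N$ of such a cover is then a simplicial complex of dimension $\le n-1$ on which $\pi_1(M)$ acts cocompactly, and the associated partition of unity produces a $\pi_1(M)$-equivariant, uniformly cobounded map $\Mtil\to|\mathcal N|$.

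Such a map exhibits $(\Mtil,\htil)$ as having macroscopic dimension at most $n-1$. But macroscopic dimension is a quasi-isometry invariant, and since $\pi_1(M)$ acts cocompactly on $(\Mtil,\htil)$ the latter is quasi-isometric to $\pi_1(M)$ with a word metric, hence to $\mathbb H^n$, whose macroscopic dimension is $n$; equivalently, a geometric cycle representing $[M]$ pushed through the $(n-1)$-dimensional complex $|\mathcal N|$ would force $\|M\|=0$, contradicting $\|M\|=\vol(M,hyp)/v_n>0$. Keeping track of the constants in the Vitali covering and in Gromov's filling/straightening inequalities should give the quantitative estimate $\|M\|\le C'(n)\vol(M,h)$, and hence $\delta(n)$.

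The hard part will be the covering estimate itself. For a general metric there is no volume doubling, so one cannot run a naive Vitali argument with comparable balls, and a crude ball cover can have unbounded multiplicity (a long thin tube is covered by many mutually overlapping unit balls). Controlling the multiplicity --- either by a stratified selection of balls adapted to the local volume, or by first replacing $(M,h)$ with a combinatorially regular model in which ball volumes are tame --- is the technical core, and it is exactly there that the hypothesis $\vol(M,h)<\delta(n)\vol(M,hyp)$ is consumed, in order to keep the nerve genuinely $(n-1)$-dimensional per fundamental domain. One might instead hope to deduce the statement from Theorem~\ref{thmBessonCourtoisGallot} by a compactness argument, but the radius $R_0$ there depends on $h$ in an uncontrolled way, which appears to obstruct that route.
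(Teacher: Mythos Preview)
This theorem is not proved in the present paper at all: it is quoted from Guth's paper \cite{G11} as background and motivation, and no argument for it is given here. So there is no ``paper's own proof'' to compare your attempt against.

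That said, your outline is in the right spirit for Guth's original argument in \cite{G11}: a ball--covering estimate on the universal cover, passage to the nerve, and a contradiction with the positivity of the simplicial volume of a hyperbolic manifold. Two points, however, are genuinely problematic in what you wrote.

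First, the logical structure is tangled. You announce a contraposition --- assume $V_{(\Mtil,\htil)}(1)\le V_{\mathbb{H}^n}(1)$ and derive $\vol(M,hyp)\le C(n)\vol(M,h)$ --- but then in the covering step you also invoke ``the fact that $\vol(M,h)$ is small relative to $\|M\|$'', which is precisely the negation of what you are trying to conclude. Either run a clean contraposition (only assume the ball bound and deduce a quantitative inequality $\|M\|\le C'(n)\vol(M,h)$), or run a clean contradiction (assume both hypotheses and derive $\|M\|=0$); mixing the two makes the argument circular as written.

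Second, and more substantively, the bound $V_{(\Mtil,\htil)}(1)\le V_{\mathbb{H}^n}(1)$ alone cannot yield a cover of $\Mtil$ by diameter-$1$ sets with multiplicity at most $n$: the hyperbolic metric itself satisfies this bound with equality, and there the macroscopic dimension is $n$, so no such cover exists. Guth's covering lemma requires the much stronger hypothesis $V(1)\le \epsilon_n$ for a \emph{small} dimensional constant $\epsilon_n$, not merely $\le V_{\mathbb{H}^n}(1)$. The passage from that basic width--volume estimate to the statement with the specific threshold $V_{\mathbb{H}^n}(1)$ and the volume hypothesis $\vol(M,h)<\delta(n)\vol(M,hyp)$ is a separate (scaling/quantitative) step, which your sketch does not isolate. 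As you yourself note, the multiplicity bound is ``the hard part''; your proposal identifies the right ingredients but does not close this gap.
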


The  method presented in \cite{G11} can be modified
to give a similar estimate for balls of radius~$R$.  
For each $R$, there is a constant
$\delta(n, R) > 0$ such that if $\vol(M,g) < \delta(n, R) \vol(M,hyp)$
then $V_{(\tilde M, \tilde g)}(R) > V_{\mathbb{H}^n}(R)$.  As
$R$ goes to infinity, the constant $\delta(n, R)$ falls off
exponentially or faster so this method become  less effective, whereas the methods in
\cite{BCG} are only effective asymptotically for very large $R$. 
This led  L. Guth to ask if we can get a  uniform estimate for $R \geq 1$. In other words, the question is: 
does there exist a positive constant $\delta(n)$ such that $\vol(M,g) < \delta(n) \vol(M,hyp)$
implies $V_{(\tilde M, \tilde g)}(R) > V_{\mathbb{H}^n}(R)$ for all $R\geq 1$?\\
\indent Here we positively answer Guth's question for the dimension $n=2$.

\begin{maintheorem}\label{maintheorem1}
There exists a positive constant $\delta$ such that if 
$(M,hyp)$ is a closed hyperbolic surface and $h$ is
another metric on $M$ with $\area(M,h)\leq \delta\area (M,hyp),$ 
then for any radius $R\geq 1$,
$$V_{(\Mtil,\htil)}(R)\geq  V_{\mathbb{H}^2}(R).$$
\end{maintheorem}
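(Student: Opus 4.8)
The plan is to exploit the gap between the small-area metric $h$ and the hyperbolic metric via a Besicovitch-type / coarea argument, combined with Guth's radius-$1$ estimate (Theorem~\ref{thmGuth}) used as a local input that is then \emph{iterated} to get uniformity in $R$. The basic obstruction to a naive iteration is exactly the one flagged in the introduction: running Guth's method on radius-$R$ balls directly forces $\delta$ to decay exponentially in $R$. The way around this is to prove a \emph{self-improving} statement: under a single, $R$-independent smallness hypothesis $\area(M,h)\le\delta\area(M,hyp)$, one shows that if the desired volume lower bound holds at some radius it must also hold at a larger radius, with no loss in the constant.

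Concretely, I would proceed as follows. \emph{Step 1 (normalization and a thick ball).} By Gauss--Bonnet, $\area(M,hyp)=2\pi(2g-2)$, so the hypothesis says $h$ has area a definite fraction of the topological complexity; in particular $M$ has large genus once $\delta$ is small, which gives room to find, via a systolic/filling argument in $(\Mtil,\htil)$, a point $\tilde x$ and a scale at which balls cannot be too thin. \emph{Step 2 (a differential inequality for $V$).} Set $v(r)=\vol B_{\htil}(\tilde x,r)$ for this good center. The coarea formula gives $v'(r)=\length(\partial B_{\htil}(\tilde x,r))$ for a.e.\ $r$, so it suffices to bound the boundary length of a ball from below by a function of its area. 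Here is where the topology re-enters: a ball in the universal cover whose area is small compared to $\area(M,h)$ is ``incompressible'' in a suitable sense — a short boundary curve would bound a disk downstairs of area comparable to $\area(M,h)$, contradicting smallness — and a Gromov-type filling inequality then yields $\length(\partial B_{\htil}(\tilde x,r))\ge c\,\vol B_{\htil}(\tilde x,r)$ as long as the ball's area stays below $\delta'\area(M,h)$ for an absolute $\delta'$. \emph{Step 3 (integrating).} The inequality $v'\ge c\,v$ forces exponential growth $v(r)\ge v(r_0)e^{c(r-r_0)}$ on the range where the incompressibility hypothesis holds, and the radius-$1$ base case from (the $R=1$ version applied through) Theorem~\ref{thmGuth} supplies the needed initial value $v(1)\ge V_{\mathbb H^2}(1)$; choosing $c\ge 1$ (which the filling constant can be arranged to give after possibly shrinking $\delta$) makes the growth at least $e^{r-1}$, hence $\ge V_{\mathbb H^2}(r)$ since $V_{\mathbb H^2}(r)\sim ce^{r}$ and in fact $2\pi(\cosh r-1)\le 2\pi e^{r}$-type comparisons can be checked directly. \emph{Step 4 (removing the upper bound on the ball's area).} The remaining worry is that the incompressibility window $\vol B\le \delta'\area(M,h)$ might be exhausted before $r$ reaches a given $R$; but if $\vol B_{\htil}(\tilde x,r)\ge \delta'\area(M,h)$ already, then since $\area(M,h)\le\delta\area(M,hyp)$ is \emph{small}, one instead compares directly: such a ball contains a fundamental-domain's worth of area and a separate packing argument in $\mathbb H^2$ shows $\delta'\area(M,h)$ can be taken $\ge V_{\mathbb H^2}(R)$ for all $R$ up to a scale determined only by $\delta$, and beyond that scale the volume is forced to grow by covering-space multiplicity. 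Patching the two regimes gives the bound for every $R\ge1$.

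The main obstacle, I expect, is Step~2 — making precise the ``incompressibility $\Rightarrow$ filling/isoperimetric inequality'' implication with an \emph{absolute} constant $c\ge1$ independent of the metric $h$, the genus, and the radius. This is the heart of the matter: it is where the hyperbolic hypothesis (genuinely: the systole and the area lower bound coming from Gauss--Bonnet) must be converted into a clean first-order inequality for $v(r)$, and it is presumably where a regularization step (cf.\ the ``Regularization Lemma'' the preamble anticipates) is needed to handle the non-smoothness of distance spheres and to replace $h$ by a metric with controlled geometry without losing more than a constant in area. Everything else — Gauss--Bonnet bookkeeping, integrating the ODE, comparing with $2\pi(\cosh R-1)$ — is routine once that inequality is in hand.
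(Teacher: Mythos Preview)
Your outline aims at the right target---a linear isoperimetric inequality $v'(r)\geq c\,v(r)$ in the universal cover, then integrate---but the mechanism you propose for Step~2 does not actually produce that inequality, and Step~4 is genuinely wrong.

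On Step~2: the ``incompressibility $\Rightarrow$ filling inequality'' implication is a slogan, not an argument. You assert that a short boundary curve in $\Mtil$ would bound a disk downstairs of area comparable to $\area(M,h)$; but a boundary of a metric ball in $\Mtil$ need not be connected, need not project to anything simple, and there is no reason its length should control the area it encloses linearly with an absolute constant $c\geq 1$. Gromov-type filling inequalities give quadratic, not linear, bounds in general, and promoting them to linear ones is exactly the content of the theorem. You correctly flag this as the heart of the matter, but nothing in the proposal supplies the missing idea.

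On Step~4: the patching argument cannot work as written. You want that once $\vol B_{\htil}(\tilde x,r)\geq \delta'\area(M,h)$, you are done; but $\area(M,h)$ is a \emph{fixed finite number} (indeed, small by hypothesis), while $V_{\mathbb{H}^2}(R)$ grows without bound. So ``$\delta'\area(M,h)\geq V_{\mathbb{H}^2}(R)$ for all $R$ up to a scale determined only by $\delta$'' is exactly the regime where the exponential growth has not yet kicked in, and ``beyond that scale the volume is forced to grow by covering-space multiplicity'' is not justified: containing a fundamental domain's worth of area in $\Mtil$ does not by itself force further exponential growth of balls.

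The paper takes a completely different route. It first reduces (via residual finiteness of surface groups) to the case $\sys(M,h)\geq\max\{2R,1/2\}$, so that $R$-balls in $\Mtil$ and in $M$ coincide. It then constructs an embedded graph $\Gamma\subset M$ of first Betti number $2g$ that \emph{captures the topology} and has length at most $\frac{1}{2}(2g-1)$, and takes $\Gamma$ \emph{minimal} in this class. The key inequality replacing your Step~2 is then purely combinatorial: for a ball $B(u,r)$ centered at a vertex of $\Gamma$, one has
\[
\length(\partial B^+(u,r))\;\geq\;\length\bigl(\Gamma\cap B^+(u,r)\bigr),
\]
because otherwise replacing $\Gamma\cap B^+$ by an arc of $\partial B^+$ would produce a shorter graph still capturing the topology, contradicting minimality. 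The right-hand side is then bounded below by the length of an $r$-ball in the universal cover of $\Gamma$, and a separate theorem about metric graphs (Theorem~\ref{maintheorem3}) shows this grows like $\sinh(r\ln 2)$. Coarea then gives the result; Theorem~\ref{maintheorem1} follows from this by a scaling argument. Guth's Theorem~\ref{thmGuth} is not used as an input at all.
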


\vspace{5mm}

Our Theorem \ref{maintheorem1} will be deduced from the following  more general theorem. 

\begin{maintheorem}\label{maintheorem2}
There exists two small positive  constants $\delta$ and $c$ such that if 
$(M,hyp)$ is a closed hyperbolic surface and $h$ is
another metric on $M$ with $\area(M,h)\leq \delta\area (M,hyp),$ 
then for any radius $R\geq 0$,
$$V_{(\Mtil,\htil)}(R)\geq  V_{\mathbb{H}^2}(cR).$$
\end{maintheorem}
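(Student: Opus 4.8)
\textbf{Proof strategy for Theorem \ref{maintheorem2}.}

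The plan is to reduce the statement to a purely metric-topological estimate about how fast balls grow in the universal cover of a surface whose area is small compared to its ``hyperbolic complexity.'' First I would fix a closed hyperbolic surface $(M,hyp)$ of genus $g\geq 2$, so that $\area(M,hyp)=4\pi(g-1)$ by Gauss--Bonnet, and let $h$ be a metric with $\area(M,h)\leq\delta\cdot 4\pi(g-1)$ for a constant $\delta$ to be chosen. The key structural input is that $\pi_1(M)$ is a surface group, hence (after passing to generators coming from a standard system of loops) admits no short relations: a natural quantity to control is the number of distinct nontrivial elements of $\pi_1(M)$ that can be represented by $h$-loops of length $\leq L$ based at a point, and the fact that this number grows at least like that of a free group of rank comparable to $g$ once $L$ exceeds the $h$-systole. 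Because the area is small, a systolic-type inequality (in the spirit of Gromov's $\sys(M,h)^2\lesssim \area(M,h)$ for essential surfaces, but one needs the sharper count, not just one short loop) forces many homotopically independent short loops through some point $\tilde x$'s projection, and lifting these to $\Mtil$ produces many disjoint or controllably-overlapping translates of a small ball, which is the mechanism that makes $\vol B_{\htil}(\tilde x, R)$ large.

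Concretely, the steps I would carry out are: (1) Normalize and invoke a covering/regularization argument — replace $h$ by a nearby metric, or pass to a CW/graph model, so that $\Mtil$ with the lifted metric is quasi-isometric, with uniform constants, to the Cayley graph of $\pi_1(M)$ with a generating set whose $h$-lengths are uniformly bounded in terms of $\sqrt{\area(M,h)/(g-1)}$; the ``Regularization Lemma'' alluded to in the preamble presumably does exactly this. (2) Establish that on this graph model the ball of radius $r$ about the basepoint contains at least $C\lambda^{r}$ vertices for explicit $C,\lambda>1$ depending only on $g$ (and not on $\delta$), using that a genus-$g$ surface group contains a free subgroup of rank $\geq 2$ quasi-convexly, with generators of bounded word length; this yields $V_{(\Mtil,\htil)}(R)\geq a\, e^{bR}$ for $R\geq R_1$, with $a,b$ and $R_1$ controlled. (3) Compare this exponential lower bound with $V_{\mathbb{H}^2}(cR)=2\pi(\cosh(cR)-1)\sim \pi e^{cR}$: choosing $c>0$ small enough that $b\geq c$ and then $\delta$ small enough (which shrinks $\sqrt{\area/(g-1)}$, hence shrinks the length of the generators, hence improves $a$ and $R_1$) makes $a e^{bR}\geq 2\pi(\cosh(cR)-1)$ hold for all $R\geq 0$, using Theorem \ref{thmGuth} or its radius-$R$ refinement to handle the bounded range $0\leq R\leq R_1$ where the asymptotic estimate is not yet in force.

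The main obstacle, and where the real work lies, is step (2) together with making every constant in step (1) \emph{independent of the genus} and of $h$ beyond the area ratio — a naive systolic argument gives one short loop but not a controlled exponential growth rate, and a naive free-subgroup argument gives exponential growth but with a rate that could degrade as $g\to\infty$ or as the $h$-geometry degenerates. The resolution I would pursue is to work with the ratio $\area(M,h)/\area(M,hyp)$ directly: a result of Gromov-type bounds the number of generators needed and their lengths in terms of this ratio (few short loops fill up little area, so smallness of the ratio forces the short loops to already generate a large-rank free-like subgroup relative to the scale $\sqrt{\area(M,h)}$), and then a self-similar ``doubling'' argument — finding inside each ball of radius $r$ two disjoint sub-balls of radius $r/c'$ — propagates a fixed growth rate down to all scales, which is precisely what produces the clean bound $V_{(\Mtil,\htil)}(R)\geq V_{\mathbb{H}^2}(cR)$ with uniform $\delta$ and $c$. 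Matching the endpoint $R=0$ and small $R$ is then a routine check since both sides vanish at $R=0$ to first order and the left side is handled by Theorem \ref{thmGuth}.
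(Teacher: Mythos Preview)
Your outline has the right aspiration but a genuine gap at its core. The quasi-isometry you invoke in step~(1) between $(\Mtil,\htil)$ and a Cayley graph of $\pi_1(M)$ has constants that depend on the \emph{diameter} of $(M,h)$, not merely on $\area(M,h)/\area(M,hyp)$; a long thin neck can make the additive constant arbitrarily large while the area stays small. Likewise, in step~(2)--(3) you pass from ``many group elements of word length $\leq r$'' to ``large volume of $B_{\htil}(\tilde x,R)$'' by lifting and placing disjoint translates of a small ball; but that requires a uniform lower bound on the $\htil$-volume of small balls, which you do not have for a general metric (and which is precisely the kind of local estimate you are trying to avoid). Your acknowledged ``main obstacle'' is therefore not resolved by the Gromov-type counting you describe, and the proposed doubling mechanism is a heuristic, not an argument. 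Finally, invoking Theorem~\ref{thmGuth} for the range $0\le R\le R_1$ is circular unless $R_1$ is bounded independently of $h$ and $g$, which again hinges on the quasi-isometry constants you have not controlled.

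The paper's route is quite different and sidesteps all of this. First it uses residual finiteness of $\pi_1(M)$ to pass to a finite orientable cover with $\sys\geq\max\{2R,1/2\}$, so that one may work in $M$ itself rather than $\Mtil$. On this cover it builds a \emph{shortest} embedded graph $\Gamma$ that captures $H_1(M;\R)$, and shows (using an $\varepsilon$-regularization of the metric to guarantee area lower bounds for small balls, and a covering-by-balls argument) that $\length(\Gamma)\leq \tfrac12(2g-1)$. The graph result, Theorem~\ref{maintheorem3}, then gives exponential growth of balls in $\Gammatil$, hence in $\Gamma$ (since $r\leq\tfrac12\sys$). The decisive step you are missing is a calibration: if $\length(\partial B^+(v,r))$ were shorter than $\length(\Gamma\cap B^+(v,r))$, one could replace $\Gamma\cap B^+(v,r)$ by an arc of $\partial B^+(v,r)$ and obtain a strictly shorter graph still capturing the topology, contradicting minimality of $\Gamma$. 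Thus $\length(\partial B^+(v,r))\geq \length(B_{(\Gamma,h)}(v,r))\gtrsim \sinh(r\ln 2)$, and the coarea formula integrates this to the desired area lower bound on $B(v,R)$. No Cayley-graph quasi-isometry, no counting of group elements, and no appeal to Theorem~\ref{thmGuth} is needed.
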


\vspace{5mm}

We can extend the notion of entropy from Riemannian manifolds to metric graphs.
Let $(\Gamma,h)$ be a metric graph and denote by $(\Gammatil,\htil)$ its universal cover.
Fix a point $v$ of $\Gamma$ and a lift  $\tilde{v}$ of this point in $\tilde{\Gamma}$. 
The volume entropy of $(\Gamma,d)$ is defined as 
$$ \Ent(\Gamma,h)=\displaystyle\lim\limits_{R \rightarrow \infty} \frac{\log(\length( B_{\htil}(\tilde{v},R)))}{R}.$$
Since $\Gamma$ is compact, the limit exists and does not depend on the point $\tilde{v}$ (see \cite{M}).

\begin{definition}
Let $(\Gamma,h)$ be a metric graph and denote by $(\Gammatil,\htil)$ its universal cover.
We define the function 
$$V'(R):=\sup_{\tilde{v}\in \Gammatil} \length(B_{\htil}(\tilde{v},R)),$$
where   $B_{\htil}(\tilde{v},R)$ is a ball of radius $R$ centered at the point $\tilde{v}$ of  $\Gammatil$.
\end{definition}

\vspace{5mm}

A regular graph is the analog of a Riemannian manifold carrying a locally symmetric metric.
For every positive integer $b\geq 2$, we denote by $\Gamma_b$
the connected trivalent graph of first Betti number $b$ and by $h_b$ the metric 
on $\Gamma_b$ for which all the edges have length 1. In \cite{KN} (see also \cite{L08}),
the authors  proved a theorem for graphs analog
to the G. Besson, G. Courtois  and S. Gallot theorem for manifolds.
They showed that for every integer $b\geq 2$ and every connected metric graph $(\Gamma,h)$ 
of first Betti number $b$   such that $\length(\Gamma,h)=\length(\Gamma_b,h_b)$, we have
$\Ent(\Gamma,h)\geq \Ent(\Gamma_b,h_b)$.
In our language, their theorem  can be stated as follows.

\begin{theorem}[\cite{KN},\cite{L08}] \label{thmEntropyGraphs1}
Let $(\Gamma,h)$ be a connected metric graph of first Betti number $b\geq 2$ 
Such that  $\length(\Gamma,h)< \length(\Gamma_b,h_b)$. 
Then  there exists some constant $R_0^\prime$ (depending on the metric $h$)
such that for every radius $R>R_0^\prime$ the following inequality holds 
$$V'_{(\Gammatil,\htil)}(R)\geq V'_{(\Gammatil_b,\htil_b)}(R).$$
\end{theorem}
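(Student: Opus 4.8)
The plan is to derive this from the entropy inequality of \cite{KN} and \cite{L08} recalled just above --- namely that every connected metric graph of first Betti number $b$ whose total length equals $\length(\Gamma_b,h_b)$ has volume entropy at least $\Ent(\Gamma_b,h_b)$ --- combined with two elementary observations. First I would note that volume entropy scales inversely to the metric: $\Ent(\Gamma,\lambda h)=\frac1\lambda\Ent(\Gamma,h)$ for every $\lambda>0$, since $B_{\lambda\htil}(\tilde v,R)=B_{\htil}(\tilde v,R/\lambda)$ while lengths are multiplied by $\lambda$. Apply the cited inequality to the rescaled metric $\lambda h$ with $\lambda:=\length(\Gamma_b,h_b)/\length(\Gamma,h)$, which by the hypothesis satisfies $\lambda>1$, still has first Betti number $b$, and now has total length exactly $\length(\Gamma_b,h_b)$. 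This gives $\frac1\lambda\Ent(\Gamma,h)=\Ent(\Gamma,\lambda h)\geq\Ent(\Gamma_b,h_b)$, hence the \emph{strict} inequality
$$\Ent(\Gamma,h)\ \geq\ \lambda\,\Ent(\Gamma_b,h_b)\ >\ \Ent(\Gamma_b,h_b),$$
using $\lambda>1$ and $\Ent(\Gamma_b,h_b)>0$. I write $\alpha:=\Ent(\Gamma,h)$ for this strictly larger number.

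The second observation is a direct computation on the model. The universal cover of $(\Gamma_b,h_b)$ is the $3$-regular tree with all edges of length $1$; counting edges level by level gives $\length(B_{\htil_b}(\tilde v,n))=3(2^n-1)$ at a vertex $\tilde v$ and an integer $n$, and, allowing arbitrary centers and arbitrary radii, a crude absolute bound
$$V'_{(\Gammatil_b,\htil_b)}(R)\ \leq\ C\cdot 2^{R}\qquad(R\geq 0)$$
for a suitable constant $C$. In particular $\Ent(\Gamma_b,h_b)=\log 2$, so by the first step $\alpha>\log 2$.

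It then remains to feed these into the definition of entropy. Fix $\varepsilon:=\tfrac12(\alpha-\log 2)>0$, so that $\alpha-\varepsilon>\log 2$. Since $\alpha=\lim_{R\to\infty}\frac{\log\length(B_{\htil}(\tilde v,R))}{R}$ for a fixed lift $\tilde v$, there is an $R_1$ with $\length(B_{\htil}(\tilde v,R))\geq e^{(\alpha-\varepsilon)R}$ for all $R\geq R_1$, whence $V'_{(\Gammatil,\htil)}(R)\geq e^{(\alpha-\varepsilon)R}$ there. Choosing $R_2$ with $e^{(\alpha-\varepsilon-\log 2)R}\geq C$ for $R\geq R_2$ and setting $R_0':=\max(R_1,R_2)$, one obtains for every $R\geq R_0'$
$$V'_{(\Gammatil,\htil)}(R)\ \geq\ e^{(\alpha-\varepsilon)R}\ \geq\ C\cdot 2^{R}\ \geq\ V'_{(\Gammatil_b,\htil_b)}(R),$$
which is the asserted inequality. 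Note that $R_0'$ genuinely depends on $h$, both through $\alpha$ and through the rate --- over which there is no a priori control --- at which the defining limit of the entropy converges.

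All the real content sits inside the cited theorem of \cite{KN} and \cite{L08}; what is written above is only its translation into the language of the function $V'$. The only points that need any attention are the scaling normalization in the first step and the fact, forced by the definition of entropy as a bare limit, that the threshold $R_0'$ cannot be taken independent of the metric; I do not expect any genuine obstacle beyond this bookkeeping.
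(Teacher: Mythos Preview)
Your argument is correct. Note, however, that the paper does not actually supply a proof of this statement: it is stated with attribution to \cite{KN} and \cite{L08}, introduced by the sentence ``In our language, their theorem can be stated as follows,'' and left without further justification. Your write-up is precisely the translation the paper alludes to---rescale to normalize the total length, invoke the cited entropy inequality to obtain a strict entropy gap, and then compare exponential growth rates---so there is nothing to contrast; you have simply made explicit what the paper takes for granted.
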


\vspace{5mm}

In view of Theorems  \ref{thmGuth} and  \ref{thmEntropyGraphs1}, one can ask the following question: 
does there exist a universal constant $c > 0$ such that
if $\length(\Gamma,h)< c\length(\Gamma_b,h_b)$, then for all $R\geq 0$
$$V'_{(\Gammatil,\htil)}(R)\geq V'_{(\Gammatil_b,\htil_b)}(R)?$$

\vspace{5mm}

\noindent We give a partial answer to this question.

\begin{maintheorem}\label{maintheorem3}
Fix $\lambda \in (0,\frac{1}{3})$. Let $(\Gamma,h)$ be a  connected metric graph 
of first Betti number  $b\geq 2$ such that 
$$\length(\Gamma,h) \leq \lambda\length(\Gamma_b,h_b).$$ 
Then there exists a vertex $\tilde{u}$ in $\Gammatil$ such that
for any $R\geq 0$,  we have 
$$\length B_{\htil}(\tilde{u},R)\geq  (1-3\lambda)V'_{(\Gammatil_b,\htil_b)}(R).$$
In particular, we have 
$$ V'_{(\Gammatil,\htil)}(R) \geq (1-3\lambda) V'_{(\Gammatil_b,\htil_b)}(R).$$
\end{maintheorem}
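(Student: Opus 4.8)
\textbf{Proof proposal for Theorem~\ref{maintheorem3}.}

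The plan is to work entirely on the combinatorial side, comparing the universal cover of $(\Gamma,h)$ with the $(3,h_b)$-regular tree that is the universal cover of $(\Gamma_b,h_b)$. First I would normalize: since $\length(\Gamma_b,h_b)=3b-3$ (a trivalent graph of first Betti number $b$ has $3b-3$ edges, each of length $1$), the hypothesis reads $\length(\Gamma,h)\le\lambda(3b-3)$. I would like to assume that $\Gamma$ itself is trivalent with first Betti number $b$ and no vertices of degree $1$ or $2$; if not, one can first pass to a topologically equivalent trivalent model by splitting high-degree vertices and suppressing degree-$2$ vertices without increasing the total length, and degree-$1$ vertices (trees hanging off) can be pruned since they do not affect the fundamental group and only decrease length. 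So we may take $\Gamma$ to have exactly $3b-3$ edges with lengths $\ell_1,\dots,\ell_{3b-3}$ summing to at most $\lambda(3b-3)$.

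The key idea is a counting/averaging argument in the universal cover. In the regular tree $\Gammatil_b$, every vertex has degree $3$, so from a base vertex the number of vertices at combinatorial distance $k$ is $3\cdot 2^{k-1}$, and $V'_{(\Gammatil_b,\htil_b)}(R)$ is (up to boundary effects) controlled by $\sum_{k\le R}3\cdot2^{k-1}$ since each edge has length $1$. In $\Gammatil$ the branching is still governed by degrees in $\Gamma$: since $\Gamma$ is trivalent, $\Gammatil$ is again a trivalent tree, but now edge lengths vary. The point is that a ball of radius $R$ in $\Gammatil$, centered at a lift of a well-chosen vertex $u$, contains at least $(1-3\lambda)$ times as much length as a unit-edge trivalent ball of radius $R$. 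To choose $u$, I would average over vertices of $\Gamma$: for a random lift, the expected number of full edges traversed before exhausting a length budget $R$ is at least $R$ times the reciprocal of the average edge length encountered, and since $\sum\ell_i\le\lambda(3b-3)$ while there are $3b-3$ edges, the average edge length is at most $\lambda$, i.e.\ most edges are short. More precisely, I would set up the comparison so that replacing each edge of $\Gamma$ by an edge of length $1$ can only be undone at a multiplicative cost bounded by the total "excess length," and the fraction of the tree's length lost is at most $3\lambda$; hence there is a vertex $\tilde u$ with $\length B_{\htil}(\tilde u,R)\ge(1-3\lambda)V'_{(\Gammatil_b,\htil_b)}(R)$ for every $R\ge 0$ simultaneously.

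Concretely the steps are: (1) reduce to a trivalent $\Gamma$ with $3b-3$ edges of total length $\le\lambda(3b-3)$; (2) observe $\Gammatil$ is an infinite trivalent tree and express $\length B_{\htil}(\tilde u,R)$ as a sum over edges of the tree weighted by how much of each edge lies within distance $R$ of $\tilde u$; (3) compare with the unit-length trivalent tree by a "rescaling" map that shortens edges, showing pointwise that the image of a radius-$R$ ball contains a radius-$R$ ball of the model up to the edges whose cumulative length deficit matters; (4) run an averaging argument over the $3b-3$ choices of base edge/vertex to find a lift $\tilde u$ for which the loss is at most the average loss, which is $\le 3\lambda V'_{(\Gammatil_b,\htil_b)}(R)$ because the edges not of full length $1$ carry total length at most $\lambda(3b-3)$ and each such edge appears in the tree with controlled multiplicity. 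The main obstacle I anticipate is step~(3)–(4): making the averaging uniform in $R$. Naively one gets a bound of the form $(1-f(R))$ with $f$ depending on $R$; the trick must be to choose $\tilde u$ once and for all (independently of $R$) so that along \emph{every} geodesic ray from $\tilde u$ the edges traversed are, on average and at every scale, not much longer than $1$ — this is where the hypothesis $\lambda<\frac13$ and the factor $1-3\lambda$ (rather than $1-\lambda$) should enter, presumably because each short edge of $\Gamma$ lifts to many edges of $\Gammatil$ and the factor $3$ accounts for the trivalent branching when bounding the worst-case accumulated deficit along a ray.
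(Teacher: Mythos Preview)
Your proposal has a genuine gap at exactly the point you flag. Steps (3)--(4) are the whole difficulty, and the mechanism you sketch---averaging over base vertices and hoping to extract a single $\tilde u$ that is good \emph{for all $R$ simultaneously}---is not justified. An average over vertices gives, for each fixed $R$, some vertex $\tilde u_R$ with loss $\le 3\lambda$, but there is no reason these vertices coincide as $R$ varies. Your proposed fix, choosing $\tilde u$ so that along \emph{every} ray and at \emph{every} scale the traversed edges average out, is a very strong requirement; nothing in the hypothesis $\sum\ell_i\le\lambda(3b-3)$ prevents one long edge from absorbing almost all the length, and any ball whose radius lies in the ``middle'' of a lift of that edge will look bad regardless of where you start. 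So the averaging heuristic, as stated, does not close.

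The paper avoids this entirely by a different argument: induction on the first Betti number $b$, after reducing to an at-least-trivalent graph. If every edge has length at most a fixed threshold $c$, then $\Gammatil$ contains an infinite trivalent subtree whose edges, after regrouping into paths of length between $C'$ and $C'+c$, give a bi-Lipschitz copy of the unit trivalent tree; this yields the bound directly (this is the paper's Proposition~\ref{propolip}). If instead some edge $w$ has length greater than $c$, one simply removes it. Either $w$ is non-separating, and the remaining graph has Betti number $b-1$ with length decreased by at least $c$, so the inductive hypothesis applies; or $w$ is separating, and a pigeonhole shows one of the two components still satisfies the length bound for its own Betti number. In both cases the universal cover of the smaller graph embeds isometrically into $\Gammatil$, so the ball estimate transfers. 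The constants are arranged so that $\lambda=\frac{c}{3(C'+c)}$ and $1-3\lambda=\frac{C'}{C'+c}$, which is where the factor $3$ actually comes from---not from trivalent branching in an averaging argument, but from the relation $\length(\Gamma_b,h_b)=3(b-1)$ in the scaling. The induction sidesteps entirely the problem of uniformity in $R$ that stalls your approach.
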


\vspace{6mm}

We sketch  an outline of the main idea of the proof of  Theorem \ref{maintheorem2}. 
Fix $R\geq 0$ and denote by $g$ the genus of $M$. 
First, we  show that we can suppose that  the systole $\sys(M,h)$ of $(M,h)$ is at least $max\{2R,1/2\}$.
This lower bound on the systole and the upper bound on the area of the surface in terms of the genus permit us to show
the existence of an embedded minimal graph $\Gamma$ in $M$ which captures 
the topology of the surface (\emph{cf.}  Definition \ref{defcapture} and Definition \ref{defminimal}) 
and  satisfies the hypothesis of Theorem \ref{maintheorem3}.
Therefore, there exists a vertex $\tilde{u}$ in $\Gammatil$ such that for all 
radii $r\in(0,R)$, the length of  the ball $B_{\Gammatil}(r)$ in $\Gammatil$ centered 
at  $\tilde{u}$ and of radius $r$ is large. Since $R\leq \frac{1}{2}\sys(\Gamma,h)$, 
the length of the projection $B_{\Gamma}(r)$
of $B_{\Gammatil}(r)$ in $\Gamma$ is also large.
Let $B_M(r)$  be the ball of radius $r$ in $M$ with the same center as $B_{\Gamma}(r)$.
For all radii $r\leq R$, the boundary of $B_M(r)$ is at least as long as the graph $\Gamma \cap B_M(r)$, 
for otherwise we could construct another graph $\Gamma'$ which captures the topology of $M$ and is shorter 
than $\Gamma$. This would contradict the minimality  of $\Gamma$. 
Since the graph $\Gamma \cap B_M(r)$ contains $B_{\Gamma}(r)$, we derive that the length of $\partial{B_M(r)}$
is large. By the coarea formula, we conclude that the area of $B_R$ is also large.

\vspace{6mm}

This paper is organized as follows. 
In Section \ref{secpreliminaries}, we recall the basic material  of graphs we need in this paper.
In Section \ref{secproofbabymaintheorem3}, we  prove a special case of Theorem \ref{maintheorem3}.
In Section \ref{secproofmaintheorem3}, we prove Theorem \ref{maintheorem3} in the general case.
In Section \ref{seccapture}, we show the existence of graphs that captures the topology of closed  orientable Riemannian surfaces.
In Section \ref{secheight}, we extend the notion of the height function originally defined by Gromov for surfaces,
then we show a relation between the height and the area of balls.
In Section \ref{secexistenceregular}, we establish the existence of $\varepsilon$-regular metrics.
In Section \ref{secmini}, we define short minimal graphs on surfaces that capture the topology and we study their properties. 
At the end of this section, we show how to control their length in terms of the genus of the surface.
In Section \ref{proofofmaintheorems}, we give the proof of  the main theorems \ref{maintheorem1} and \ref{maintheorem2}.
\\

\noindent \textbf{Acknowledgment}. 
The author would like to thank his advisor, St\'ephane Sabourau, for many useful discussions and valuable comments.
He also  would like to thank Larry Guth for reading and commenting this paper.

\section{Preliminaries}\label{secpreliminaries}

By a graph $\Gamma$ we mean a finite  one-dimensional CW-complexe (multiple edges and loops are allowed).
It is also useful to see $\Gamma$ as a pair of sets $(V,E)$ where $V$ is a set of vertices 
and $E$ the set of edges, which are 2-element subsets of $V$.
Two vertices of a graph  are called \emph{adjacent} if there is an edge linking them. 
An edge and a vertex are called \emph{incident} if the vertex is an endpoint of the edge.
The \emph{degree} (also known as valence) of a vertex~$v$, denoted by $\degree(v)$, is 
the number of edges incident to it, where the loops are counted twice.
We say that a graph $\Gamma$ is \emph{$k$-regular} if the degree of any vertex is $k$. 
In particular, a  $3$-regular graph is called \emph{trivalent}. 
The minimal  degree of a graph $\Gamma$ is the minimum 
of the degrees of the vertices. It will  be denoted by $\Mindeg(\Gamma)$. 
A graph $\Gamma$ with $\Mindeg(\Gamma)\geq 3$ is called at least trivalent. 
For a graph $\Gamma$, we always denote by $E(\Gamma)$ the set of its edges and by $V(\Gamma)$ the set of its vertices.
The first Betti number  of a graph $\Gamma$ can be computed as follows:
\begin{eqnarray}
b(\Gamma)=e-v+n,
\end{eqnarray}
where $e,v$ and $n$ are respectively the number of edges, 
vertices and connected components of $\Gamma$. 
\\
\indent 
The degree sum formula states that, given a graph $\Gamma$, we have that 
\begin{eqnarray}
\displaystyle\sum\limits_{v}\degree(v)=2e,
\end{eqnarray}
where the summation is over all vertices $v$ of $\Gamma$.
\\
\indent 
For an at least trivalent connected graph $\Gamma$ with first Betti number $b$,  we have that $2e\geq 3v$  by $(2.2)$. 
Combined with $(2.1)$, we get $e\leq 3b-3$. That means that the number of edges of $\Gamma$
is bounded in terms of its first Betti number $b$.
Also it is not hard to see from $(2.1)$ and $(2.2)$ that 
every  connected graph of first Betti number $b\geq 2 $ has at least one vertex of degree at least $3$.
\\
\indent 
Let $\Gamma$ be a connected graph, $v_0$ and $v_1$ be two vertices of $\Gamma$. 
A path $P$ from $v_0$ to $v_1$ is a sequence of directed edges that links $v_0$ to $v_1$.
The vertex $v_0$ is called the start point of $P$ and $v_1$ the endpoint.
If $v_0=v_1$ then $P$ is said to be closed, otherwise $P$ is open.  
A simple path is a path with no self intersections. A simple closed path is often called a cycle.
\\

A metric graph $(\Gamma,h)$ is a graph endowed with a metric $h$ such that $(\Gamma,h)$ is a length space. 
The length of a subgraph of $\Gamma$ is its one-dimensional Hausdorff measure. 
For more details on graphs we refer the reader to \cite{DGT}.
\vspace{4mm}

Throughout this paper if $R$ is a real number then $[R]$ is the integral part of $R$.
\\

For the connected trivalent metric graph $(\Gamma_b,h_b)$ of first Betti number $b\geq 2$
where edges are of unit length, the following holds: 
\begin{itemize}
\item 
\end{itemize}

\vspace{-13mm}

\begin{eqnarray}
\hspace{-55mm}
\length(\Gamma_b,h_b)=3b-3.
\end{eqnarray}

\begin{itemize}
\item The universal cover  $\Gammatil_b$ is isometric to the trivalent infinite tree. 
In particular, $\Gammatil_b$ is independent of $b$. So for every $b'\geq 2$  we have 
$$V'_{(\Gammatil_b,\htil_b)}(R)=V'_{(\Gammatil_{b'},\htil_{b'})}(R).$$
\\
\item For every  $R\geq 0$ and every vertex $\tilde{v}$ of $(\Gammatil_b,\htil_b)$,  we have 
\begin{eqnarray}
 \length (B_{\htil_b}(\tilde{v},R))
 &=& 3\sum_{n=0}^{[R]-1}2^n +3(R-[R])2^{[R]} \nonumber \\
 &=&  3(2^{[R]}-1) + 3(R-[R])2^{[R]} \nonumber \\
 &\geq & \sinh(R\ln2).
\end{eqnarray}
Therefore, $V'_{(\Gammatil_b,\htil_b)}(R)\geq \sinh(R\ln2)$.
\\
In particular, one should notice that the volume of the ball $B_{\htil_b}(\tilde{v},R)$
is independent from the vertex $\tilde{v}$ and from the first Betti number $b$.
It only depends on $R$.
\end{itemize}

\section{Baby  theorem \ref{maintheorem3}} \label{secproofbabymaintheorem3}
In this section,  we  prove Theorem \ref{maintheorem3} with an additional bound on
the lengths of the edges of $\Gamma$ and on the minimal degree of $\Gamma$ (\emph{cf.} Section \ref{secpreliminaries}).

\begin{proposition}\label{propolip}
Let $c$ and $C'$ be two  positive constants with $c \leq C'$.
Let $(\Gamma,h)$ be a  connected, at least trivalent metric graph of first Betti number  $b\geq 2$
such that the edges of $\Gamma$ are of length  at most $c$. 
Then there exists a vertex $\tilde{u}$ in $\Gammatil$ such that for  any $R\geq 0$,  we have 
$$\length B_{\htil}(\tilde{u},(C'+c)R) \geq C' V'_{(\Gammatil_b,\htil_b)}(R).$$
In particular, we have
$$V'_{(\Gammatil,\htil)}((C'+c)R)\geq C' V'_{(\Gammatil_b,\htil_b)}(R).$$
\end{proposition}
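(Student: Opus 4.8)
The plan is to work entirely inside the universal cover $\Gammatil$. Since $\Gamma$ is connected and one-dimensional, $\Gammatil$ is a tree, and since $\Gamma$ is at least trivalent, every vertex of $\Gammatil$ has degree $\geq 3$ (degrees are preserved under covering maps, and a tree with no leaves is infinite). Fix an arbitrary vertex $\tilde u$ of $\Gammatil$; I claim it satisfies the conclusion. The main object is the counting function $N(t):=\card S_{\htil}(\tilde u,t)$, the number of points of $\Gammatil$ at distance exactly $t$ from $\tilde u$. Because $\Gammatil$ is a tree, $B_{\htil}(\tilde u,\rho)$ is the union of the geodesic rays issuing from $\tilde u$, each truncated at length $\rho$, and the coarea formula applied to the $1$-Lipschitz function $\dist(\tilde u,\cdot)$ gives $\length B_{\htil}(\tilde u,\rho)\ge \int_0^{\rho}N(t)\,dt$ for every $\rho\ge 0$.

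The heart of the argument is the estimate $N(t)\ge 3\cdot 2^{[t/c]}$ for every $t>0$ that is not the distance from $\tilde u$ to a vertex of $\Gammatil$ (a countable, hence null, set of exceptional radii). Indeed, points of $S_{\htil}(\tilde u,t)$ correspond bijectively to the distinct geodesic segments of length $t$ issuing from $\tilde u$, via $x\mapsto[\tilde u,x]$. Along such a segment every edge has length at most $c$, so the segment contains its first $[t/c]$ edges entirely (since $c\,[t/c]\le t$); and since $t$ is not a vertex distance, the segment passes strictly through each of the intermediate vertices $v_1,\dots,v_{[t/c]}$ and continues beyond $v_{[t/c]}$. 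At $\tilde u$ there are at least $3$ choices for the first edge, and at each $v_i$ at least $\degree(v_i)-1\ge 2$ ways to continue; since in a tree distinct choices produce distinct segments that never rejoin, this yields at least $3\cdot 2^{[t/c]}$ such segments.

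Plugging this into the coarea inequality with $\rho=(C'+c)R$ and substituting $t=cu$ gives
$$\length B_{\htil}\big(\tilde u,(C'+c)R\big)\ \ge\ \int_0^{(C'+c)R}3\cdot 2^{[t/c]}\,dt\ =\ c\int_0^{(C'+c)R/c}3\cdot 2^{[u]}\,du\ =\ c\,V'_{(\Gammatil_b,\htil_b)}\!\Big(\tfrac{C'+c}{c}\,R\Big),$$
where the last equality is precisely the formula for the length of balls in the trivalent tree recalled in Section~\ref{secpreliminaries}. To finish, note that $X\mapsto V'_{(\Gammatil_b,\htil_b)}(X)=\int_0^X 3\cdot 2^{[u]}\,du$ is convex (it is the primitive of a nondecreasing function) and vanishes at $X=0$, so $V'_{(\Gammatil_b,\htil_b)}(\beta X)\ge \beta\,V'_{(\Gammatil_b,\htil_b)}(X)$ for all $\beta\ge 1$. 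Applying this with $\beta=\tfrac{C'+c}{c}\ge 1$ and $X=R$ gives $\length B_{\htil}(\tilde u,(C'+c)R)\ge (C'+c)\,V'_{(\Gammatil_b,\htil_b)}(R)\ge C'\,V'_{(\Gammatil_b,\htil_b)}(R)$, which is even slightly stronger than claimed; the ``in particular'' clause is immediate since $V'_{(\Gammatil,\htil)}(\rho)\ge \length B_{\htil}(\tilde u,\rho)$.

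The step I expect to require the most care is the lower bound $N(t)\ge 3\cdot 2^{[t/c]}$: one must dispose cleanly of the null set of radii at which a geodesic segment ends exactly at a vertex, and — more importantly — track the integer part precisely enough to obtain the exponent $[t/c]$ rather than $[t/c]-1$. A lost factor of $2$ here would replace the final constant $C'+c$ by $\tfrac{C'+c}{2}$, which is smaller than $C'$ as soon as $C'>c$; since equality is achieved when $\Gamma$ is a rescaling of $\Gamma_b$, the estimate is essentially tight and this bookkeeping genuinely matters.
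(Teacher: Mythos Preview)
Your proof is correct and takes a genuinely different route from the paper's. The paper proceeds constructively: it builds, inside $\Gammatil$, an infinite trivalent subtree $\mathcal{T}'$ whose ``edges'' are paths of $\Gammatil$ with lengths in $[C',C'+c]$ (obtained by walking along edges until the accumulated length first exceeds $C'$), and then uses the obvious homeomorphism $f:\Gammatil_b\to\mathcal{T}'$, which satisfies $C'\,d(x,y)\le d(f(x),f(y))\le (C'+c)\,d(x,y)$ on vertices. The inequality then falls out by comparing $B_{\htil_b}(w,R)$ with its image. Your approach is analytic rather than constructive: you bound the sphere-cardinality $N(t)\ge 3\cdot 2^{[t/c]}$ directly from the edge-length hypothesis and integrate via coarea, identifying the resulting integral with the explicit formula for $V'_{(\Gammatil_b,\htil_b)}$ from Section~\ref{secpreliminaries}.

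What each buys: the paper's construction yields an explicit bi-Lipschitz copy of the model tree inside $\Gammatil$, a structural statement that could be reused; your argument is shorter, avoids the inductive subtree construction, and is in fact slightly sharper on two counts --- it works at \emph{every} vertex $\tilde u$ (not just a specially built one), and it produces the constant $C'+c$ rather than $C'$ in front of $V'_{(\Gammatil_b,\htil_b)}(R)$. Your bookkeeping on the exponent $[t/c]$ (rather than $[t/c]-1$) is correct: for $t$ outside the countable set of vertex-distances, every geodesic of length $t$ has strictly passed $v_{[t/c]}$ since $\sum_{i=1}^{[t/c]}\length(e_i)\le [t/c]\,c\le t$ and equality with $t$ is excluded, so the branching choice at $v_{[t/c]}$ is genuinely made.
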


\begin{proof}

Let $\mathcal{T}$ be a connected trivalent infinite subgraph of $\Gammatil$.
We will construct a connected trivalent infinite subgraph $\mathcal{T'}$ of $\mathcal{T}$ for which
there exists an homeomorphism $f:\Gammatil_b \to \mathcal{T'}$ that satisfies the following:\\
For every pair of vertices $x,y$ of $\Gammatil_b$, we have 
\begin{eqnarray}
C'd(x,y)\leq d(f(x),f(y)) \leq (C'+c)d(x,y).
\end{eqnarray}
For the sake of clarification, we will do this construction step by step.
\\
\\
\emph{Step 1:}
Start by fixing a vertex $v_0$ in $\mathcal{T}$.
Let $e_{1v_0}$ be one of the three edges of $\mathcal{T}$ 
incident to $v_0$ and denote by $v_{1}$ its second endpoint. 
Again let $e_{1v_1}$ be one of the other two edges of $\mathcal{T}$ 
incident to $v_{1}$ and denote by $v_{2}$ its second endpoint. 
The path $e_{1v_0}e_{1v_1}$ is  simple and open. 
We continue doing this by induction and we denote by $v_{k}$ the first vertex where 
the length of the path  $e_{1v_0}...e_{1v_k}$ is at least $C'$. 
The graph $\mathcal{T}$ contains no nontrivial cycles since it is a tree.
That means that the path $p_1=e_{1v_0}...e_{1v_k}$ is simple and open. Furthermore, 
the length of $p_1$ is between $C'$ and $C'+c$.
Now take the second edge  $e_{2v_0}$ of $\mathcal{T}$ incident to  $v_0$ and restart the  process of Step 1.
This give us another simple open path $p_2$. Again, since $\mathcal{T}$ contains no nontrivial cycles the 
intersection $p_1\cap p_2$ is the vertex $v_0$. Also restart the process with the third edge of $\mathcal{T}$ 
incident to $v_0$ to get the third path $p_3$. 
\\
\\
\emph{Step 2:}
The tree $X=p_1 \cup p_2 \cup p_3$ has three leaves. 
For each leaf $x_i$ of $X$ there are two edges of $\mathcal{T}$ incident to it 
other than the edge that is already in $X$. So by restarting the process of Step 1, 
we construct two paths  of length at least $C'$ with start point $x_i$.
By induction, we keep doing what we did before to finally get the subgraph $\mathcal{T'}$. 
In what follows each path $p_i$ of the subgraph $\mathcal{T'}$ 
will be seen as an edge of the same length of $p_i$. That means $\mathcal{T'}$
can be seen as  a connected infinite trivalent subgraph   of $\mathcal{T}$
where the length of any edge of $\mathcal{T'}$ is between $C'$ and $C'+c$.
The graphs $\Gammatil_b$ and $\mathcal{T'}$ are two  infinite trivalent trees so there exists 
an homeomorphism $f:\Gammatil_b \to \mathcal{T'}$ that sends every edge of 
$\Gammatil_b$ to an edge of $\mathcal{T'}$.
\\

Now we prove that the map $f$ satisfies $(3.1)$. Without loss of generality, 
we will prove our claim when $x$ and $y$ are the endpoints 
of the same edge $e_{xy}$ in $\Gammatil_b$, that is,  $d(x,y)=1$. 
By construction of the map $f$, the length of the image of an edge of $\Gammatil_b$ is between $C'$ and $C'+c$.
So  clearly 
$$ C'd(x,y)\leq d(f(x),f(y))\leq (C'+c)d(x,y).$$ 
Now let $\tilde{u}$ be a vertex of $\mathcal{T'}$ and denote by $w$ its inverse image in $\Gammatil_b$.  
By $(3.1)$, we have 
\begin{eqnarray}
C'\length B_{\htil_b}(w,R) &\leq & \length (f(B_{\htil_b}(w,R)) ) \nonumber \\
&\leq& \length (B_{\htil}(\tilde{u},(C'+c)R)) , \nonumber
\end{eqnarray}
Hence the proposition.
\end{proof}

\section{Proof of theorem \ref{maintheorem3}}\label{secproofmaintheorem3}
In this section, we prove Theorem \ref{maintheorem3}. As a preliminary, let us examine  
how the function $V'$ changes with scaling.
Let $(\Gamma,h)$ be a metric graph and $h'=\mu h$ with $\mu>0$ then 
\begin{itemize}
\item $\length(\Gamma,h')=\mu \length(\Gamma,h)$;
\item $V'_{(\Gammatil,\hprimetil)}(\mu R)=\mu V'_{(\Gammatil,\htil)}(R)$.
\end{itemize}

\begin{definition}\label{defignore}
Let $\Gamma$ be a connected metric graph of first Betti number at least two.
If $v$ is a vertex of $\Gamma$ of degree two then by the sentence $``\emph{ignore the vertex~$v$}$''
we mean delete the two edges $e_1$ and $e_2$ of $\Gamma$ incident to $v$ 
and replace them by
an edge of $\length\; \length(e_1)+\length(e_2)$ that  links the other two vertices of $e_1$ and $e_2$.
\end{definition}

\begin{lemma}\label{lemmatrivalent}
Let $(\Gamma,h)$ be a connected metric graph of first Betti number $b\geq 2$.
There exists a metric graph $(\Gamma',h')$ with first Betti number $b'=b$ that satisfies the following.
\begin{itemize}
\item $\Gamma'$ is at least trivalent;
\item $\length(\Gamma',h')\leq \length(\Gamma,h)$;  
\item For all $R\geq 0$,  
$$V'_{(\Gammatil',\hprimetil)}(R)\leq V'_{(\Gammatil,\htil)}(R).$$
\end{itemize}
\end{lemma}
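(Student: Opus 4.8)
The plan is to remove all vertices of degree $\leq 2$ from $\Gamma$ without increasing the length and without decreasing the growth function $V'$, thereby producing the desired at least trivalent graph $\Gamma'$. Since $b\geq 2$, the graph $\Gamma$ has at least one vertex of degree $\geq 3$, so it has at least one edge and cannot be a point. First I would deal with isolated vertices and with vertices of degree one: an isolated vertex contributes nothing to the length and can be deleted, and a degree-one vertex together with its unique incident edge forms a ``hair'' that can be deleted as well. Deleting a hair strictly decreases the length (by the length of that edge) and does not change the first Betti number, since an edge with a free endpoint lies in no cycle; moreover it does not decrease $V'$, because any ball in the universal cover of the smaller graph isometrically embeds into the universal cover of the original graph (the universal cover of $\Gamma$ minus a hair is a subtree/subgraph of the universal cover of $\Gamma$ in the natural way). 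Iterating, after finitely many steps (the number of edges is finite) we reach a graph with no vertices of degree $0$ or $1$.

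Next I would eliminate vertices of degree two using the operation ``ignore the vertex'' from Definition \ref{defignore}: replacing the two edges $e_1,e_2$ at a degree-two vertex $v$ by a single edge of length $\length(e_1)+\length(e_2)$. This operation leaves the length unchanged, leaves the first Betti number unchanged (we are just subdividing/un-subdividing), and—crucially—leaves the universal cover isometric as a metric space, hence leaves $V'$ unchanged. One subtlety: if $e_1$ and $e_2$ happen to be the same edge (a loop at $v$), or if ignoring $v$ would create issues, one has to argue that since $b\geq 2$ the graph is not reduced to a single loop, so this degenerate configuration can be avoided or handled directly; in general each ``ignore'' operation reduces the number of vertices by one, so the process terminates. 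After finitely many such operations we obtain $(\Gamma',h')$ with $\Mindeg(\Gamma')\geq 3$, with $\length(\Gamma',h')\leq \length(\Gamma,h)$ (equality except for the hairs we removed), with $b'=b$, and with $V'_{(\Gammatil',\hprimetil)}(R)\leq V'_{(\Gammatil,\htil)}(R)$ for all $R\geq 0$.

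The main point to get right is the inequality $V'_{(\Gammatil',\hprimetil)}(R)\leq V'_{(\Gammatil,\htil)}(R)$, i.e.\ that the universal-cover growth function does not increase under these reductions. For the hair-removal step the cleanest argument is that the inclusion $\Gamma'\hookrightarrow\Gamma$ is a homotopy equivalence onto its image which lifts to an isometric embedding $\Gammatil'\hookrightarrow\Gammatil$ of length spaces (a convex, or at least distance-preserving, subset), so every ball $B_{\hprimetil}(\tilde v,R)$ maps isometrically into a ball $B_{\htil}(\tilde v,R)$, giving the length inequality after taking the supremum over $\tilde v$. For the degree-two removal step the embedding $\Gammatil'\hookrightarrow\Gammatil$ is in fact an isometry, so equality holds. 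The main obstacle, and the step deserving the most care, is verifying that these lifts are genuinely \emph{isometric} embeddings (not merely $1$-Lipschitz) — one needs that a geodesic in $\Gammatil$ between two points of the image stays in the image, which follows from the fact that the removed hairs are dead-ends (a shortest path never profits from entering a hair) and that ignoring a degree-two vertex only re-parametrizes edges. Once this is established, taking suprema over basepoints finishes the proof.
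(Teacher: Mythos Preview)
Your proposal is correct and follows essentially the same approach as the paper: iteratively delete degree-one vertices (``hairs''), then ignore degree-two vertices, using that the first step gives an isometric embedding of universal covers and the second step gives an isometry. Your write-up is in fact more careful than the paper's on the key point that the lift $\Gammatil'\hookrightarrow\Gammatil$ is an \emph{isometric} embedding (via the dead-end argument), which the paper asserts without justification.
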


\begin{proof}
First we remove every vertex of $\Gamma$  of degree one along with the edge incident to it 
and denote by $\Gamma_1$ the resulting connected graph. We apply the same process to $\Gamma_1$. That means 
we  remove every vertex of $\Gamma_1$  of degree one along with the edge incident to it and we denote by $\Gamma_2$
the resulting connected graph . By induction, let $\Gamma_k$ be the last connected graph where no vertex  of degree one left. 
The graph $\Gamma_k$ is of first Betti number $b$ and of length less  or equal to the length of $\Gamma$. We keep denoting
by $h$ the restriction of the metric $h$ to $\Gamma_k$.
The universal cover $\Gammatil_k$ is isometrically embedded into $\Gammatil$ so 
$$V'_{(\Gammatil,\htil)}(R) \geq  V'_{(\Gammatil_k,\htil)}(R).$$
Second, we ignore every vertex of $\Gamma_k$ of degree two (\emph{cf.} Definiton \ref{defignore}). 
The resulting graph $\Gamma'$ is 
connected of first Betti number $b$ and of the same length as $\Gamma_k$.
The universal cover $\Gammatil'$ agrees with $\Gammatil_k$ so 
$$V'_{(\Gammatil',\htil)}(R) =  V'_{(\Gammatil_k,\htil)}(R).$$
\end{proof}

In order to prove Theorem \ref{maintheorem3}, it is convenient here to reformulate it. 
Given $\lambda \in (0,\frac{1}{3})$, let $c$ and $C'$ be two positive constants 
such that $c\leq C'$ and $\lambda=\frac{c}{3(C'+c)}$. 
So a reformulated version of Theorem \ref{maintheorem3} is the following.

\begin{theorem}
Let $(\Gamma,h)$ be a  connected metric graph of first Betti number  $b\geq 2$.
Let $C'$ and $c$ be two  positive constants with $c\leq C'$. Suppose that 
$$\length(\Gamma,h)\leq \frac{c}{3(C'+c)}\length(\Gamma_b,h_b).$$ 
Then there exists a vertex $\tilde{u}$ in $\Gammatil$ such that for any $R\geq 0$,  we have
$$\length B_{\htil}(\tilde{u},R)\geq \frac{C'}{C'+c} V'_{(\Gammatil_b,\htil_b)}(R).$$
In particular, we have  
$$ 
V'_{(\Gammatil,\htil)}(R) \geq \frac{C'}{C'+c} V'_{(\Gammatil_b,\htil_b)}(R).
$$
\\
\end{theorem}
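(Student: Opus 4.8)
The plan is to reduce the general graph $(\Gamma,h)$ to the at-least-trivalent situation covered by Proposition \ref{propolip}, then rescale. First I would invoke Lemma \ref{lemmatrivalent} to replace $(\Gamma,h)$ by an at-least-trivalent connected metric graph $(\Gamma_1,h_1)$ of the same first Betti number $b$, with $\length(\Gamma_1,h_1)\leq\length(\Gamma,h)$ and $V'_{(\Gammatil_1,\htil_1)}(R)\leq V'_{(\Gammatil,\htil)}(R)$ for all $R\geq 0$. Since the conclusion we want is a lower bound on $V'_{(\Gammatil,\htil)}$, it suffices to prove it with $(\Gamma_1,h_1)$ in place of $(\Gamma,h)$; and by the length bound on $\Gamma_1$ together with $(2.4)$ (i.e.\ $\length(\Gamma_b,h_b)=3b-3$), the hypothesis $\length(\Gamma_1,h_1)\leq\frac{c}{3(C'+c)}(3b-3)$ still holds. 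So from now on I assume $\Gamma$ itself is at least trivalent.

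Next comes the rescaling trick, which is the crux. The edges of $\Gamma$ need not be short, so I cannot apply Proposition \ref{propolip} directly; instead I rescale the metric. Put $\mu=\frac{c}{\length(\Gamma,h)}\cdot\frac{3b-3}{3(C'+c)}\cdot\frac{1}{3b-3}$ — more cleanly, choose $\mu>0$ so that $h'=\mu h$ has all edges of length at most $c$. Concretely, since $\Gamma$ is at least trivalent with $b(\Gamma)=b$, we have $e\leq 3b-3$ edges, so $\length(\Gamma,h)\leq\frac{c}{3(C'+c)}(3b-3)$ forces the average edge to be short but not every edge; however $\length(\Gamma,h')=\mu\length(\Gamma,h)$ and each edge of $\Gamma$ has $h'$-length at most $\length(\Gamma,h')=\mu\length(\Gamma,h)$, so taking $\mu=\frac{c}{\length(\Gamma,h)}$ makes every edge have $h'$-length $\leq c$. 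With this $\mu$, apply Proposition \ref{propolip} with constants $c$ and $C'$: there is a vertex $\tilde u\in\Gammatil$ with $\length B_{\htil'}(\tilde u,(C'+c)R)\geq C'\,V'_{(\Gammatil_b,\htil_b)}(R)$ for every $R\geq 0$.

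Then I would undo the scaling using the two scaling identities recorded at the start of Section \ref{secproofmaintheorem3}: $\length(\Gamma,h')=\mu\length(\Gamma,h)$ and $V'_{(\Gammatil,\hprimetil)}(\mu R)=\mu V'_{(\Gammatil,\htil)}(R)$, and correspondingly for balls $\length B_{\htil'}(\tilde u,\mu R)=\mu\length B_{\htil}(\tilde u,R)$. Also the graph $\Gamma_b$ is fixed (edges of length $1$), so I should be careful: the right-hand side $V'_{(\Gammatil_b,\htil_b)}$ does not scale, but the radius $R$ on the right of Proposition \ref{propolip}'s conclusion is a free parameter, so I can choose it to absorb $\mu$. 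Writing $r=(C'+c)R$ for the radius appearing on the left and translating by $\mu$: $\length B_{\htil}(\tilde u, r/\mu)\cdot\mu\geq C' V'_{(\Gammatil_b,\htil_b)}(r/(C'+c))$. Combining with $\mu=\frac{c}{\length(\Gamma,h)}$ and the hypothesis $\length(\Gamma,h)\leq\frac{c}{3(C'+c)}(3b-3)=\frac{c}{3(C'+c)}\length(\Gamma_b,h_b)$, one gets $\frac{1}{\mu}=\frac{\length(\Gamma,h)}{c}\leq\frac{3b-3}{3(C'+c)}$; a short manipulation then yields, after relabeling the radius, $\length B_{\htil}(\tilde u,R)\geq\frac{C'}{C'+c}V'_{(\Gammatil_b,\htil_b)}(R)$, using that $V'_{(\Gammatil_b,\htil_b)}$ is the length of a ball in the trivalent tree (independent of $b$, by the bullet in Section \ref{secpreliminaries}) and is $1$-Lipschitz-controlled enough that shrinking the radius by a bounded factor costs only the factor $C'/(C'+c)$; concretely one uses $V'_{(\Gammatil_b,\htil_b)}(tR)\geq t\,V'_{(\Gammatil_b,\htil_b)}(R)$ for $t\in[0,1]$, which follows from the explicit formula $(2.5)$ or from convexity/superadditivity of $R\mapsto\length B_{\htil_b}(\tilde v,R)$. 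The main obstacle is bookkeeping the scaling factors so that the edge-length bound $c$, the contraction constant $C'$, and the ratio $\lambda=\frac{c}{3(C'+c)}$ all match up; the geometric content is entirely in Proposition \ref{propolip}, and everything here is rescaling plus the elementary properties of $V'_{(\Gammatil_b,\htil_b)}$.
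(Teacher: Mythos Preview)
Your rescaling approach has a genuine gap: the inequality $V'_{(\Gammatil_b,\htil_b)}(tR)\geq t\,V'_{(\Gammatil_b,\htil_b)}(R)$ for $t\in[0,1]$ is false---it goes the other way. The function $R\mapsto V'_{(\Gammatil_b,\htil_b)}(R)$ is convex with value $0$ at $R=0$ (its derivative is $3\cdot 2^{[R]}$, nondecreasing), and for any such function one has $f(tR)\leq t f(R)$ when $t\in[0,1]$, not $\geq$. Tracing your constants, after rescaling by $\mu=c/\length(\Gamma,h)$ and applying Proposition~\ref{propolip} you obtain, in the original metric, $\length B_{\htil}(\tilde u,\rho)\geq \frac{C'}{\mu}\,V'_{(\Gammatil_b,\htil_b)}\bigl(\tfrac{\mu}{C'+c}\rho\bigr)$; to reach the desired bound you would need exactly the false inequality with $t=\mu/(C'+c)$, and this $t$ can be as small as $1/(b-1)$. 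For a concrete failure: take $b$ large and $(\Gamma,h)$ saturating the length hypothesis with one edge carrying almost all the length. Your rescaled graph has total length $c$, and the conclusion of Proposition~\ref{propolip} unwinds to $\length B_{\htil}(\tilde u,S)\gtrsim (b-1)\,V'_{(\Gammatil_b,\htil_b)}\bigl(S/(b-1)\bigr)$, which for $S$ of order $b$ is linear in $b$ while the target $\frac{C'}{C'+c}V'_{(\Gammatil_b,\htil_b)}(S)$ is exponential in $b$.

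The paper's proof handles exactly this obstruction by an induction on $b$: after reducing to the at-least-trivalent case and rescaling so that the hypothesis reads $\length(\Gamma,h)\leq c(b-1)$, either every edge has length at most $c$ (and Proposition~\ref{propolip} applies directly), or one removes an edge of length $>c$. If that edge is non-separating the Betti number drops to $b-1$ and the remaining length is $\leq c(b-2)$, so the induction hypothesis applies; if it is separating, a counting argument shows one of the two components, say $\Gamma'$ with Betti number $b'$, still satisfies $\length(\Gamma')\leq c(b'-1)$, and one recurses there. In all cases the universal cover of the smaller graph embeds isometrically in $\Gammatil$, and one uses that $V'_{(\Gammatil_{b'},\htil_{b'})}=V'_{(\Gammatil_b,\htil_b)}$. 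The induction is precisely what lets you strip off long edges without paying the exponential penalty your global rescaling incurs.
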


\begin{proof}
By  scaling, we will prove the following. Suppose that 
$$\length(\Gamma,h)\leq \frac{c}{3}\length(\Gamma_b,h_b)=c(b-1).$$ 
Then there exists a vertex $\tilde{u}$ in $\Gammatil$ such that for any $R\geq 0$,  we have
$$\length B_{\htil}(\tilde{u},(C'+c)R)\geq C' V'_{(\Gammatil_b,\htil_b)}(R).$$
In particular, we have 
$$ 
V'_{(\Gammatil,\htil)}((C'+c)R) \geq C' V'_{(\Gammatil_b,\htil_b)}(R).
$$
\\
\indent  First notice that by Lemma \ref{lemmatrivalent}, we can suppose that $\Gamma$ is at least trivalent.
We proceed by induction on the first Betti number of $\Gamma$.
For $b=2$, we have 
$$\displaystyle\max\limits_{e\in E} \length(e)<\length(\Gamma,h)\leq  c(2-1)=c.$$  
By Proposition \ref{propolip}, the result follows in this case. 
\\
Suppose the result holds for $b=n$ and let us show that it also for $b=n+1$.
Let $(\Gamma,h)$ be a connected metric graph of first Betti number $b=n+1$.
If $\Gamma$ contains no edge of length greater than $c$ then the result follows 
from Proposition \ref{propolip}. Thus we suppose the opposite here and  
remove an edge $w$ of $\Gamma$ of length greater than $c$. There are two cases to consider.
\\
\\
\emph{Case 1}: The edge $w$ is non-separating in $\Gamma$.
In this case, the resulting graph $\Gamma'$ is connected and of first Betti number $b'=n$. 
Furthermore, we have  
$$\length(\Gamma')\leq \length(\Gamma)-c \leq c(b'-1).$$
The universal cover $\Gammatil'$ is isometrically embedded into $\Gammatil$.
So for every vertex $\tilde{v}$ in $\Gammatil'$ and every $R>0$, we have
$$\length (B_{(\Gammatil,\htil)}(\tilde{v},R)) \geq \length (B_{(\Gammatil',\htil)}(\tilde{v},R)).$$
In particular, we have
$$V'_{(\Gammatil,\htil)}(R) \geq  V'_{(\Gammatil',\htil)}(R).$$
On the other hand,  by the hypothesis of the induction, we know that there exists a vertex $\tilde{u}$ in $\Gammatil'$
such that 
$$\length( B_{(\Gammatil',\htil)}(\tilde{u},R))\geq V'_{(\Gammatil_n,\htil_n)}(R)=V'_{(\Gammatil_{n+1},\htil_{n+1})}(R).$$
In particular, we have 
$$V'_{(\Gammatil',\htil)}(R)\geq V'_{(\Gammatil_{n+1},\htil_{n+1})}(R).$$
This finishes the proof in this case.
\\
\\
\emph{Case 2}: The edge $w$ is separating in $\Gamma$.
Thus, it splits the graph $\Gamma$ into two connected graphs $\Gamma'$ and $\Gamma''$ of first Betti number  
 $b'$ and $b''$.
We claim that $\length(\Gamma')\leq c(b'-1)$ or $\length(\Gamma'')\leq c(b''-1)$. Indeed, suppose the opposite 
then 
$$\length(\Gamma')+\length(\Gamma'')>c(b-2).$$
On the other hand we have 
$$\length(\Gamma')+\length(\Gamma'')+c<\length(\Gamma)\leq c(b-1).$$
Hence a contradiction. So the claim is proved.
\\
Without loss of generality, suppose that $\Gamma'$ satisfies $\length(\Gamma')\leq c(b'-1)$. 
Clearly $b'\geq 2$, otherwise the length of $\Gamma'$ would vanish. By induction, we now 
 there exists a vertex $\tilde{u}$ in $\Gammatil'$ such that 
$$\length( B_{(\Gammatil',\htil)}(\tilde{u},R))\geq V'_{(\Gammatil_{b'},\htil_{b'})}(R)=V'_{(\Gammatil_{b},\htil_{b})}(R).$$
In particular, we have  
$$V'_{(\Gammatil',\htil)}(R)\geq V'_{(\Gammatil_{b},\htil_{b})}(R).$$
Recall that the universal cover $\Gammatil'$ is isometrically embedded into $\Gammatil$. So 
for every vertex $\tilde{v}$ in $\Gammatil'$ and every $R>0$, we have
$$\length (B_{(\Gammatil,\htil)}(\tilde{v},R)) \geq \length (B_{(\Gammatil',\htil)}(\tilde{v},R)).$$
In particular, we have
$$V'_{(\Gammatil,\htil)}(R) \geq  V'_{(\Gammatil',\htil)}(R).$$
This finishes the proof in this case too.

\end{proof}

\section{Capturing the topology of surfaces}\label{seccapture}
In this section, we show that on every closed orientable Riemannian
surface $M$ there exist an  embedded graph that captures its topology.

\begin{definition}\label{defcapture}
Let $(M,h)$ be a  closed Riemannian surface of genus $g$.
The image in $M$ of an abstract graph by an  embedding will be refered to as a graph in~$M$.
The metric $h$ on $M$ naturally induces a metric  on a graph $\Gamma$ in~$M$.
Despite the risk of confusion,  we will also denote by $h$ such a metric on $\Gamma$.
\\
\indent We  say that a graph $\Gamma$ in $M$ \emph{captures the topology} of $M$ if the map 
induced by the inclusion $i_\ast :H_1(\Gamma,\R) \rightarrow H_1(M,\R)$ is an epimorphism.
\\
\end{definition}

\begin{lemma}\label{bettigenus}
Let $(M,h)$ be a closed orientable Riemannian surface.
Let  $\Gamma$ be a connected  graph in $M$ that captures its topology  and 
denote by $i:\Gamma \to M$ the inclusion map.
Then there exists a connected subgraph $\Gamma'$ of $\Gamma$ such that 
the map $i_\ast$ restricted to $\Gamma'$ is an isomorphism.
In particular the first Betti number of $\Gamma'$ is $2g$.
\end{lemma}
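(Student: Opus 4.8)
The plan is to start from the epimorphism $i_\ast : H_1(\Gamma,\R) \to H_1(M,\R)$ and shrink $\Gamma$ edge by edge until the induced map becomes an isomorphism, keeping connectedness throughout. Since $\Gamma$ is connected and $i_\ast$ is surjective onto $H_1(M,\R)\cong \R^{2g}$, the rank of $H_1(\Gamma,\R)$, namely $b(\Gamma)$, is at least $2g$. If $b(\Gamma)=2g$, then a surjective linear map between vector spaces of the same finite dimension is automatically injective, so $i_\ast$ is already an isomorphism and we take $\Gamma'=\Gamma$. Otherwise $b(\Gamma)>2g$ and the kernel of $i_\ast$ is nontrivial; I want to find an edge whose removal keeps $\Gamma$ connected and does not decrease the image.

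The key step is the following: choose a spanning tree $T$ of $\Gamma$. The edges of $\Gamma$ not in $T$ form a basis of cycles generating $H_1(\Gamma,\R)$; call them $e_1,\dots,e_m$ with $m=b(\Gamma)>2g$, and let $\gamma_j$ be the corresponding cycle (the unique cycle in $T\cup\{e_j\}$). The classes $[\gamma_1],\dots,[\gamma_m]$ span $H_1(\Gamma,\R)$, hence their images $i_\ast[\gamma_j]$ span $H_1(M,\R)$, which is $2g$-dimensional. Since $m>2g$, these images are linearly dependent, so some $i_\ast[\gamma_k]$ lies in the span of the others; equivalently, after relabeling, $i_\ast[\gamma_k] = \sum_{j\neq k} a_j\, i_\ast[\gamma_j]$. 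Removing the non-tree edge $e_k$ from $\Gamma$ gives a connected subgraph $\Gamma_1$ (connectedness is preserved because $e_k\notin T$ and $T$ remains a spanning tree), with $b(\Gamma_1)=b(\Gamma)-1$, and the image of $(i|_{\Gamma_1})_\ast$ still contains all $i_\ast[\gamma_j]$ for $j\neq k$, hence still all of $H_1(M,\R)$. So $\Gamma_1$ still captures the topology. Iterating, after finitely many steps we reach a connected subgraph $\Gamma'$ with $b(\Gamma')=2g$ that still captures the topology, and then, as in the equal-dimension remark above, $(i|_{\Gamma'})_\ast$ is an isomorphism.

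The main thing to be careful about — really the only subtle point — is to make sure connectedness and surjectivity are both genuinely preserved at each removal, and that the process terminates; both are handled by always deleting a \emph{non-tree} edge with respect to a fixed (or updated) spanning tree, which drops $b$ by exactly one while keeping the graph connected, so the induction is on $b(\Gamma)-2g$. One should also note $b(\Gamma')=2g$ follows immediately since $H_1(\Gamma',\R)\xrightarrow{\sim} H_1(M,\R)\cong \R^{2g}$. No other obstacle arises; the argument is essentially linear algebra over $\R$ plus the elementary fact that removing a non-tree edge preserves connectedness.
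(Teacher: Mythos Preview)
Your proof is correct and follows essentially the same idea as the paper's: remove edges lying in cycles that are redundant for surjectivity until the induced map becomes an isomorphism. The only difference is packaging---the paper takes a connected subgraph $\Gamma'$ with a \emph{minimal} number of edges among those for which $i_\ast$ is still surjective, and then argues by contradiction that the kernel must be trivial (if $\alpha$ were a nontrivial kernel cycle, deleting any edge of $\alpha$ would yield a smaller subgraph still inducing a surjection, since any cycle through the deleted edge can be corrected by a multiple of $\alpha$); you instead run an explicit induction using a spanning tree to locate a non-tree edge whose fundamental cycle has redundant image. Both arguments hinge on the same linear-algebra observation and the same connectedness fact (an edge contained in a cycle can be removed without disconnecting the graph), so there is no substantive difference.
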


\begin{proof}
Let $\Gamma'$ be a connected subgraph of $\Gamma$ with minimal number of edges such 
that the restriction of $i$ to $\Gamma'$ still induces an epimorphism in real homology.
Let $\alpha$ be a cycle of $\Gamma'$ representing a nontrivial element of the kernel of $i_\ast$.
Remove an edge $e$ from $\alpha$. 
The resulting graph $\Gamma''$ has fewer edges than $\Gamma'$.
Let $\beta$ be a cycle of $\Gamma'$.
If $e$ does not lie in $\beta$ then the cycle $\gamma=\beta$ lies in $\Gamma''$. 
Otherwise, adding a suitable real multiple of $\alpha$ to $\beta$ yields a new cycle $\gamma$ lying in $\Gamma''$. In
both cases, the cycle $\gamma$ of $\Gamma''$ is sent to the same homology class as $\beta$ by $i_\ast $.
Thus, the restriction of~$i$ to~$\Gamma''$ still induces an epimorphism in the real 
homology, which is absurd by definition of~$\Gamma'$.
\end{proof}

\vspace{5mm}

In what follows a graph $\Gamma$ in a Riemannian manifold $(M,h)$ is automatically 
equipped with the metric $h$ induced by the  metric of $M$. So the length of $\Gamma$ 
is its one-dimensional Hausdorff measure associated to the metric $h$.

\begin{definition}
Let $(M,h)$ be a closed orientable Riemannian surface. 
We define  
$$L(M,h):= \displaystyle\inf\limits_{\Gamma}\length(\Gamma),$$
where the infimum is taken over all  graphs $\Gamma$ in $M$ that capture its  topology. 
\\
\end{definition}

\begin{lemma}
Let $(M,h)$ be a closed orientable Riemannian surface of genus~$g$. 
Then there exists a graph   $\Gamma$ in $M$ that captures its topology with
$$\length(\Gamma)=L(M,h).$$
\end{lemma}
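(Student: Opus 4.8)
The plan is to show that the infimum defining $L(M,h)$ is attained, using a standard compactness/direct-method argument adapted to the fact that graphs capturing the topology have uniformly bounded combinatorics. First I would reduce to a good class of competitors: by Lemma \ref{lemmatrivalent} together with Lemma \ref{bettigenus}, any graph capturing the topology of $M$ contains a connected subgraph $\Gamma'$ on which $i_\ast$ is an isomorphism, so $b(\Gamma')=2g$, and after removing degree-one vertices and ignoring degree-two vertices we may take a minimizing sequence $(\Gamma_k)$ consisting of connected, at least trivalent graphs in $M$ with $b(\Gamma_k)=2g$ and $\length(\Gamma_k)\to L(M,h)$. By the discussion in Section \ref{secpreliminaries}, an at least trivalent connected graph of first Betti number $2g$ has at most $6g-3$ edges and at most $4g-2$ vertices; so all the $\Gamma_k$ have the same underlying combinatorial type up to passing to a subsequence (there are only finitely many such types).

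Next I would extract a limit. Having fixed the combinatorial type, each $\Gamma_k$ is described by a fixed finite set of vertex-positions $v_1^{(k)},\dots,v_p^{(k)}\in M$ together with, for each edge, a path in $M$ joining the appropriate endpoints. Replacing each edge-path by a shortest geodesic in $M$ between its endpoints (in the appropriate homotopy class rel endpoints, or simply a minimizing geodesic) only decreases length and preserves the homology class of every cycle, hence preserves the property of capturing the topology; so I may assume each edge of $\Gamma_k$ is a minimizing geodesic segment, determined by its two endpoints. Since $M$ is compact, pass to a subsequence so that each vertex sequence $v_j^{(k)}$ converges to some $v_j\in M$. Define $\Gamma_\infty$ to be the graph with these limit vertices and, for each edge, a minimizing geodesic between the corresponding limit endpoints. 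Lower semicontinuity of geodesic distance (indeed continuity) gives $\length(\Gamma_\infty)\le \liminf_k \length(\Gamma_k)=L(M,h)$.

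The main obstacle is the homotopy/homology bookkeeping in the limit: a priori $\Gamma_\infty$ need not be embedded (vertices may collide, edges may degenerate to points or overlap), and I must verify it still captures the topology, i.e. that $i_\ast$ on $H_1$ is still onto. I would handle this by tracking homology classes rather than the maps themselves: for $k$ large the $\Gamma_k$ are all uniformly close to $\Gamma_\infty$ (each edge of $\Gamma_k$ lies in a small tubular neighborhood of the corresponding edge of $\Gamma_\infty$, once $k$ is large, because minimizing geodesics depend continuously on their endpoints away from the cut locus; one handles endpoints on cut loci by a further small perturbation that does not increase length by more than $\varepsilon$). A basis of cycles of $\Gamma_k$ then maps, under the near-identity isotopy carrying $\Gamma_k$ onto (a subcomplex deformation retract of a neighborhood of) $\Gamma_\infty$, to cycles of $\Gamma_\infty$ with the same image in $H_1(M,\R)$; since the $\Gamma_k$-cycles generate $H_1(M,\R)$, so do their images, so $\Gamma_\infty$ captures the topology. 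Finally, if $\Gamma_\infty$ fails to be embedded, I apply Lemma \ref{bettigenus} and the cleanup of Lemma \ref{lemmatrivalent} once more to pass to an embedded connected subgraph $\Gamma$ of (a small perturbation of) $\Gamma_\infty$ that still captures the topology and has $\length(\Gamma)\le\length(\Gamma_\infty)\le L(M,h)$; by definition of $L(M,h)$ the reverse inequality holds, so $\length(\Gamma)=L(M,h)$, which is the claim. I expect the delicate point to be precisely this last perturbation-to-embedded step, since it must be done without increasing length past the infimum; the resolution is that any graph capturing the topology with length $<L(M,h)+\varepsilon$ can be replaced by an embedded one with length $<L(M,h)+\varepsilon$, and then a diagonal argument over $\varepsilon\to 0$ combined with the compactness above produces the minimizer.
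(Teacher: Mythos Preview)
Your approach is essentially the paper's: reduce to connected, at-least-trivalent geodesic graphs of first Betti number $2g$, whose number of edges is then bounded in terms of $g$, and extract a minimizer by compactness. The paper compresses your whole limit discussion into the single assertion that the space of connected geodesic graphs of $M$ capturing its topology with bounded length and a bounded number of edges is compact, without unpacking vertex convergence or homology tracking.

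One caution on your edge-replacement step: the alternative ``or simply a minimizing geodesic'' is not safe. An arbitrary minimizing segment between the endpoints need not lie in the original fixed-endpoint homotopy class of the edge, so a cycle passing through that edge can change its class in $H_1(M,\R)$ and the graph may cease to capture the topology. You must take the shortest representative in the fixed-endpoint homotopy class, which is what the paper does; with that choice the homology of every cycle is preserved automatically and your later ``homology bookkeeping in the limit'' reduces to the observation that, with the combinatorial type fixed, the homology class in $M$ of each basis cycle depends only on the (finitely many) homotopy classes of the edges and on the vertex positions, and varies continuously (hence is locally constant) in the latter.
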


\begin{proof}
By Lemma \ref{bettigenus},   we only need to consider the set of graphs 
in $M$ that captures its topology with first Betti number $2g$ and such that $i_\ast$ is an isomorphism.
Furthermore,  we only need to consider graphs that are at least trivalent.
Indeed, delete every vertex of $\Gamma$  of degree one along with the edge incident to it. 
Denote by $\Gamma_1$ the resulting connected graph and apply to $\Gamma_1$ the same process. That means 
we  delete every vertex of $\Gamma_1$  of degree one along with the edge incident to it and we denote by $\Gamma_2$
the resulting connected graph . By induction, let $\Gamma_k$ be the last connected graph with no vertex of degree one. 
We then ignore all vertices of $\Gamma_k$ of degree two (\emph{cf.} Definition \ref{defignore}). Replacing every edge of $\Gamma_k$ by a minimal representative
of its fixed-endpoint homotopy class gives rise to a geodesic graph $\Gamma'$. By construction the connected geodesic graph 
$\Gamma'$ is at least trivalent and of first Betti number $2g$. Thus, its number of edges is bounded in terms of $g$, 
\emph{cf.} Section \ref{secpreliminaries}. Now the space of connected geodesic graphs of $M$ capturing its topology
with bounded length and a bounded number of edges is compact. The result follows.
\end{proof}

\begin{definition}\label{defminimal}
Let $(M,h)$ be a closed orientable Riemannian surface. 
If $\Gamma$ is a graph that captures the topology of  $M$ with $\length(\Gamma)=L(M,h)$, 
then $\Gamma$ is called a minimal graph in $M$.
\end{definition}

\section{Height function and area of balls.}\label{secheight}

In this section, we first recall the definition of the \emph{height} function 
on surfaces defined by Gromov in \cite{G1} along with its relation to the area of balls. 
Then we extend this notion to make it suit our problem.
\\
\\
Let $M$ be a closed Riemannian manifold. 
The systole  at a point $x$ in $M$, denoted by $\sys(M,x)$, is the length of the shortest non-contractible loop based at $x$. 
The systole of $M$, denoted by  $\sys(M)$, is  the length of the shortest non-contractible loop in $M$.

\begin{definition}\label{definitiontension}
Let $(M,h)$ be a closed Riemannian surface and $\gamma$ be a  non-contractible loop in $M$.
We define the tension of $\gamma$ as follows.
$$\tens(\gamma)=\length(\gamma) - \displaystyle\inf\limits_{\beta\sim \gamma}(\length(\beta)),$$
where the infimum is taken  over all closed curves $\beta$ freely homotopic to $\gamma$.
\\
\indent We also define the height  function $H'$ on $M$ as follows
$$H'(x)=\displaystyle\inf\limits_\gamma(\tens(\gamma)),$$
where the infimum is taken over all non-contractible closed curves $\gamma$ passing through $x$.
\end{definition}

\begin{proposition}[Gromov, \cite{G1} Proposition 5.1.B]\label{areaballsgromov}
Let $(M,h)$ be a complete Riemannian surface and $x\in M$.
Then 
$$\area B(x,R)\geq \frac{1}{2}(2R-H'(x))^2,$$
for every $R$ in the interval  $[\frac{1}{2}H'(x),\frac{1}{2}\sys(M,x)]$.
\\
\end{proposition}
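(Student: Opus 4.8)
The plan is to bound $\area B(x,R)$ from below by the coarea inequality $\area B(x,R)\ge\int_0^R\mathcal H^1(\partial B(x,t))\,dt$ (valid because $y\mapsto\dist(x,y)$ is $1$-Lipschitz), and to reduce everything to the pointwise estimate
$$\mathcal H^1(\partial B(x,t))\ \ge\ 2\bigl(2t-H'(x)\bigr)\qquad\text{for }t\in\left[\tfrac12 H'(x),\ \tfrac12\sys(M,x)\right].$$
Integrating this from $\tfrac12 H'(x)$ to $R$ gives exactly $\tfrac12(2R-H'(x))^2$, so this is the right intermediate goal; the endpoints of the interval are visibly the only place where the bound is nontrivial but not yet sharp. (If $H'(x)\ge\sys(M,x)$ the interval is empty, and at $R=\tfrac12 H'(x)$ the claim is trivial.)

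To prove the sphere estimate, fix $\varepsilon>0$ and choose a non-contractible loop $\gamma$ through $x$ with $\tens(\gamma)<H'(x)+\varepsilon$; set $L=\length(\gamma)\ge\sys(M,x)$ and parametrize $\gamma$ by arclength on $\R/L\Z$ with $\gamma(0)=x$. I would first record that for $2t\le\sys(M,x)$ the sub-arc $\gamma|_{[0,t]}$ is homotopic rel endpoints to a length-minimizing geodesic between $x$ and $\gamma(t)$: otherwise $\gamma|_{[0,t]}$ followed by the reverse of such a geodesic is a non-contractible loop at $x$ of length $t+\dist(x,\gamma(t))<2t\le\sys(M,x)$, impossible. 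Replacing $\gamma|_{[0,t]}$ by that geodesic produces a loop freely homotopic to $\gamma$ of length $L-t+\dist(x,\gamma(t))$, whence, with $\tens(\gamma)<H'(x)+\varepsilon$, one gets $\dist(x,\gamma(\pm t))\ge t-H'(x)-\varepsilon$. Thus $t\mapsto\dist(x,\gamma(t))$ stays within $H'(x)+\varepsilon$ of $|t|$ on $[-\tfrac12\sys(M,x),\tfrac12\sys(M,x)]$, so by the intermediate value theorem, for each $t$ in the range above there are parameters $s_+\in(0,\tfrac12\sys(M,x))$ and $s_-\in(-\tfrac12\sys(M,x),0)$ with $\dist(x,\gamma(s_\pm))=t$ and $s_\pm=\pm t+O(H'(x)+\varepsilon)$; the points $\gamma(s_+),\gamma(s_-)$ then lie on $\partial B(x,t)$.

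The heart of the matter is that these two points are far apart, $\dist(\gamma(s_+),\gamma(s_-))\ge 2t-H'(x)-o(1)$, after which one pays a factor $2$: passing to the cover of $M$ associated with $\langle[\gamma]\rangle$ (or the universal cover), the covering map restricted to the $t$-ball is — since $2t\le\sys(M,x)$ — an isometry onto $B(x,t)$ and onto $\partial B(x,t)$, so one may assume $B(x,t)$ is an embedded disk and hence that $\partial B(x,t)$ is, for a.e.\ $t$, a rectifiable Jordan curve enclosing $x$, of length at least twice the distance between any two of its points. For the distance estimate I would do surgery on $\gamma$: replace the arc $A$ joining $\gamma(s_-)$ to $\gamma(s_+)$ through $x$ (of length $s_+-s_-=2t+O(H'(x)+\varepsilon)$) by a minimizing geodesic $c$ between its endpoints, of length $D:=\dist(\gamma(s_+),\gamma(s_-))$. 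If the resulting loop $A\cdot c$ is contractible, then $\gamma$ is freely homotopic to a loop of length $(L-(s_+-s_-))+D$, giving $D\ge(s_+-s_-)-\tens(\gamma)\ge 2t-H'(x)-\varepsilon$ directly; if instead $A\cdot c$ is non-contractible, it is a non-contractible loop through $x$ of length $(s_+-s_-)+D\ge\sys(M,x)$, which again forces $D$ to be large in the regime $t\le\tfrac12\sys(M,x)$.

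The main obstacle is precisely this last step: upgrading "there is a nearly tension-minimizing loop through $x$" to a lower bound on $\mathcal H^1(\partial B(x,t))$ with the sharp constant. One has to choose $\gamma$, and in the surgery keep control of which of the two surgered sub-loops (and which of their further cut-pieces) is contractible — managing the basepoint ambiguity through the systole bound — so that the two competing lower bounds on $D$ always combine to at least $2t-H'(x)$ on the whole interval $[\tfrac12 H'(x),\tfrac12\sys(M,x)]$; this interval is exactly where the "shortening" regime and the "systole" regime meet, which is why the statement is restricted to it. The remaining ingredients — the coarea inequality, the disk/Jordan-curve reduction in the cover, and the limit $\varepsilon\to0$ — are routine.
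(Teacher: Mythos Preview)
The paper does not prove this proposition; it is quoted verbatim from Gromov's \emph{Filling Riemannian manifolds} (Prop.~5.1.B) and used as a black box, so there is no in-paper argument to compare against.

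Your outline is essentially Gromov's argument, and the reduction via coarea to the sphere estimate $\mathcal H^1(\partial B(x,t))\ge 2(2t-H'(x))$ is the correct intermediate goal. The gap you yourself flag, however, is real and does not close as stated: in the case where $A\cdot c^{-1}$ is non-contractible you only obtain $D\ge\sys(M,x)-(s_+-s_-)\ge\sys(M,x)-2t-2(H'(x)+\varepsilon)$, and on the interval in question this falls short of $2t-H'(x)$ (it would need $\sys(M,x)\ge 4t+H'(x)$, which you do not have). The clean resolution is not a more careful case split but a different choice of surgery curve: replace the minimizing geodesic $c$ by each of the two arcs $\beta_\pm$ of $\partial B(x,t)$ joining $\gamma(s_-)$ to $\gamma(s_+)$. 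Since $A\cup\beta_\pm$ lies entirely in the contractible ball $\overline{B(x,t)}$, the loop $A\cdot\beta_\pm^{-1}$ is \emph{automatically} contractible, so $\gamma$ is freely homotopic to $\beta_\pm$ followed by the complementary arc of $\gamma$, giving $\length(\beta_\pm)\ge(s_+-s_-)-\tens(\gamma)\ge 2t-H'(x)-\varepsilon$ for both arcs, hence the sphere estimate with no dichotomy at all.

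A secondary technical point: the implication ``$B(x,t)$ is an embedded disk $\Rightarrow$ $\partial B(x,t)$ is a single Jordan curve'' is not literally true; even for $2t<\sys(M,x)$ the level set $\{d(x,\cdot)=t\}$ can have several components. One handles this either by showing both $\gamma(s_\pm)$ lie on the outermost component (the one separating $x$ from the rest of the cover), or---as the paper itself does when proving the analogous estimate for its own height function $H''$ in Proposition~\ref{smallballsvolume}---by passing to the filled-in ball $B^+(x,t)$.
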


\vspace{5mm}

\begin{definition}
Let $(M,h)$ be a closed orientable Riemannian surface. 
For $x \in M$, we define
$$L(M,x) := \displaystyle\inf\limits_{\Gamma_x}\length(\Gamma_x),$$
where the infimum is taken over  all  graphs $\Gamma_x$ in $M$ that capture its topology and  pass through $x$.
\\  
We also define the function $H''$ on $M$ as follows.
$$H''(x):=L(M,x)-L(M,h).$$
Finally we define the function $H$ on $M$ as
$$H(x):=min(H'(x),H''(x)),$$
where $H'$ is defined in Definition \ref{definitiontension}.
\end{definition}

\begin{definition} \label{defcell}
If $B$ is a ball in some closed Riemannian surface $M$ 
with some contractible boundary components, we fill in every such component 
of $\partial B$ by an open $2$-cell in $M$ and denote by $B^+$ the union of $B$ with these cells.
\end{definition}

\begin{proposition}\label{smallballsvolume}
Let $(M,h)$ be a closed Riemannian surface of genus $g\geq 1$
and $x\in M$ with $H(x)<\frac{1}{2}\sys(M,x)$. 
Then the area of the ball $B(x,R)$  satisfies the inequality 
$$\area\;B(x,R)\geq \frac{1}{2}(R-H(x))^2,$$
for every $R$ in the interval
$]H(x),\frac{1}{2}\sys(M,x)[.$
\\
\end{proposition}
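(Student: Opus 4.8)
The plan is to reduce to Gromov's Proposition \ref{areaballsgromov} by a case analysis on which of $H'(x)$ or $H''(x)$ realizes the minimum $H(x)$. If $H(x)=H'(x)$, then for $R$ in $]H(x),\frac12\sys(M,x)[$ we certainly have $R\geq \frac12 H'(x)$ (since $R>H'(x)\geq \frac12 H'(x)$), so Proposition \ref{areaballsgromov} applies directly and gives
$$\area B(x,R)\geq \frac12(2R-H'(x))^2\geq \frac12(R-H'(x))^2=\frac12(R-H(x))^2,$$
where the middle inequality is just $2R-H'(x)\geq R-H'(x)\geq 0$. So this case is immediate.

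The substantive case is $H(x)=H''(x)<H'(x)$. Here I would argue directly with the minimal graph. Let $\Gamma$ be a minimal graph in $M$ realizing $L(M,h)$, and let $\Gamma_x$ be a graph capturing the topology, passing through $x$, with $\length(\Gamma_x)$ close to $L(M,x)=L(M,h)+H''(x)$. Fix $R\in\,]H(x),\frac12\sys(M,x)[$ and suppose for contradiction that $\area B(x,R)<\frac12(R-H(x))^2$. The idea is to use the coarea formula: if the areas of the balls $B(x,r)$ for $r\leq R$ are all small, then some level set $\partial B(x,r)$ must be short — more precisely, one shows there is a radius $r\in(H(x),R)$ with $\length(\partial B(x,r))<R-H(x)$, since otherwise $\area B(x,R)\geq \int_{H(x)}^R \length(\partial B(x,r))\,dr\geq (R-H(x))^2$ (or with the factor $\frac12$ after being careful about the coarea inequality and the fact that $B(x,R)$ contains $B(x,R)\setminus B(x,H(x))$). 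Now I would form a competitor graph: take $\Gamma$, cut it along $\partial B(x,r)$, discard the parts inside $B(x,r)$, and reconnect the loose ends using arcs in $\partial B(x,r)$ together with a short path through $x$ inside $B(x,r)$ (whose length is at most $2r<2R$, which after rescaling... — here one must be careful). The resulting graph still captures the topology (capturing is a homological condition, and the surgery along a separating curve $\partial B(x,r)$, which bounds on one side, does not destroy surjectivity in $H_1$ once the contractible boundary components are filled as in Definition \ref{defcell}), passes through $x$, and has length at most $\length(\Gamma) - (\text{length removed}) + \length(\partial B(x,r)) + 2r$. Comparing with $L(M,x)\leq L(M,h)+H''(x)$ and using $\length(\partial B(x,r))<R-H(x)$ should force a contradiction with the definition of $H''(x)=L(M,x)-L(M,h)$, because the saved length inside $B(x,r)$ exceeds what we add back.

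The main obstacle I anticipate is the surgery step: ensuring that after cutting the minimal graph $\Gamma$ along $\partial B(x,r)$ and rerouting through $x$, the new object is genuinely an embedded graph capturing the topology and passing through $x$, with a length bound tight enough to beat $L(M,x)$. One has to handle the contractible boundary components of $B(x,r)$ (the reason $B^+$ and Definition \ref{defcell} are introduced), verify that replacing the portion of $\Gamma$ inside $B(x,r)$ by boundary arcs plus a path through $x$ preserves surjectivity of $i_\ast$ on $H_1$, and track the constants so that the bound $\area B(x,R)\geq \frac12(R-H(x))^2$ — rather than the stronger $(R-H(x))^2$ — comes out, the factor $\frac12$ presumably absorbing the slack from the path through $x$ of length $\leq 2r\leq 2R$ versus the gain. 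I would also need $g\geq 1$ so that $M$ has nontrivial topology for the graph-capturing argument to have content, and $H(x)<\frac12\sys(M,x)$ so that the interval $]H(x),\frac12\sys(M,x)[$ is nonempty and the relevant loops/curves used in bounding $H'$ and $H''$ remain non-contractible at the scales involved.
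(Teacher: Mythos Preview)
Your reduction of the case $H(x)=H'(x)$ to Gromov's Proposition~\ref{areaballsgromov} is fine and matches the paper. The problem is in the case $H(x)=H''(x)$: your surgery runs in the wrong direction and cannot close.

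You propose to start from a \emph{minimal} graph $\Gamma$ realizing $L(M,h)$, cut it along $\partial B(x,r)$, discard the part inside, and reconnect via boundary arcs together with a path through $x$, producing a competitor for $L(M,x)$. For this to yield a contradiction you need the ``length removed'' to exceed what you add back. But you have no lower bound whatsoever on $\length(\Gamma\cap B^+(x,r))$: the minimal graph $\Gamma$ need not pass through $x$, need not even meet $B(x,r)$, and if it does it may merely graze the ball. So the saved length can be arbitrarily small, while the added path through $x$ costs on the order of $2r$, and your inequality never closes regardless of how short $\partial B(x,r)$ is. The obstacle you flagged as ``tracking the constants'' is not a matter of bookkeeping; the required lower bound simply does not exist.

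The paper reverses the comparison. It starts from a near-optimal graph $\Gamma_x$ \emph{through} $x$ (so $\length(\Gamma_x)\leq L(M,x)+\varepsilon'$), observes that $\Gamma_x\cap B^+(x,r)$ is a tree containing a path from the center $x$ to $\partial B^+$ and hence has length at least $r$, removes this piece, and reconnects the endpoints on $\partial B^+$ by a single sub-arc $C\subset\partial B^+$. The resulting graph is a competitor for $L(M,h)$ (it no longer needs to pass through $x$), giving
\[
L(M,h)\ \leq\ \length(\Gamma_x)-r+\length(C)\ \leq\ L(M,x)+\varepsilon'-r+\length(\partial B^+),
\]
hence $\length(\partial B^+(x,r))\geq r-H''(x)-\varepsilon'$ for every $r$ in the interval. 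Integrating via the coarea inequality from $H''(x)$ to $R$ then yields $\frac12(R-H''(x))^2$ directly, with no contradiction argument and no extra path through $x$ to account for. The key point you are missing is to surgeon $\Gamma_x$ into a competitor for $L(M,h)$ rather than surgeoning $\Gamma$ into a competitor for $L(M,x)$; only the former direction comes with the guaranteed saving $\length(\Gamma_x\cap B^+)\geq r$.
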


\begin{proof}
We  suppose that $H(x)=H''(x)$ here, since the other case follows from
Proposition \ref{areaballsgromov}.
Let $r\in ]H(x),\frac{1}{2}\sys(M,x)[.$
Notice that since $r<\frac{1}{2}\sys(M,x)$ the ball $B=B(x,r)$ is contractible in $M$, 
and so the set $B^+=B^+(x,r)$ is a topological disk.
Let $\varepsilon$ be a fixed small positive constant such that $H''(x)+\varepsilon<r$. 
Fix $\varepsilon' \in (0,\varepsilon)$. Let $\Gamma_x$  be a graph   
in $M$ that captures its topology and  passes through $x$ of length at most 
$L(M,x)+\varepsilon'$. \linebreak
Without loss of generality,  we claim that we can always suppose that 
$\Gamma_x \cap B^+(x,r)$ 
is a tree such that $x$ is the only possible vertex of degree one.
Indeed, we  delete an edge from each loop of $\Gamma_x\cap B^+(x,r)$. This defines a new graph $\Gamma'$.
Then we delete every vertex of $\Gamma'$  of degree one other than the vertex $x$ 
along with the edge incident to it and we 
denote by $\Gamma_1$ the resulting connected graph. Restart the process. That means 
we  delete every vertex of $\Gamma_1$  of degree one other than the vertex $x$ 
along with the edge incident to it and we denote by $\Gamma_2$
the resulting connected graph. By induction, let $\Gamma_k$ be the 
last connected subgraph where the only possible vertex of degree one is $x$.
Clearly $\Gamma_k$ passes through $x$, captures the topology of $M$ and is of length  at
most $L(M,x)+\varepsilon'$. So the claim is proved.
\\

Now we claim that  either $x$ is of degree at least two or there is at least a vertex 
of $\Gamma_x \cap B^+$ of degree at least three.
Indeed, suppose that $x$ is of degree one and all the other vertices of $\Gamma_x \cap B^+$ are of degree two.
Then  $\Gamma_x\cap B^+$ is just a piecewise 
curve that passes through
$x$ and hits  $\partial B^+$  at one point, so  
its length is greater or equal to $r$.
Thus 
$$\length(\Gamma_x)\geq L(M,h)+r.$$
In particular, we have 
$$L(M,h)+ r \leq L(M,x)+\varepsilon' \leq L(M,x)+\varepsilon.$$
That means 
$$r \leq H''(x)+\varepsilon,$$
which is  a contradiction.
\\
\\
In both cases above, the graph $\Gamma_x$ hits 
the boundary of $B^+$ in at least two points.
Let $C$ be a minimal arc of $\partial B^+$ that connects 
the  points of $\Gamma_x\cap \partial B^+$. 
Consider the graph  $\Gamma'$  defined as $(\Gamma_x\setminus (\Gamma_x\cap B^+))\cup C$. 
It is clear that $\Gamma'$ is a connected  graph in $M$ that captures its topology, since $B^+$ is contractible in $M$.
Thus
$$\length(\Gamma')\geq L(M,h).$$ 
On the other hand, the length of $\Gamma_x\cap B^+$ is at least $r$.
This means that 
$$\length(\Gamma_x)\geq \length(\Gamma')+r-\length(C).$$
So
$$L(M,x) +\varepsilon' \geq L(M,h) +r-\length(C).$$
We conclude that for every small positive constant $\varepsilon'$, we have
$$H''(x)\geq r-\length(C)-\varepsilon'.$$
Since the length of $\partial B^+$ is at least  the length of the arc $C$, we have
$$\length (\partial B^+)\geq r- H''(x).$$
By the coarea formula,
\begin{eqnarray}
\area B(x,R)&\geq & \int_0^R \length(\partial B(x,r))dr \nonumber \\
&\geq & \int_{H''(x)}^R \length(\partial B^+(x,r))dr \nonumber \\
&= & \frac{1}{2}(R-H''(x))^2.\nonumber
\end{eqnarray}
\end{proof}

\section{Existence of $\varepsilon$-regular metrics.}\label{secexistenceregular}
In this section, we define $\varepsilon$-regular metrics and  prove their existence.
The existence of $\varepsilon$-regular metrics will play a crucial role in 
controlling  the length of  minimal graphs on surfaces.

\begin{definition}
Let $(M ,h)$ be a closed Riemannian surface. 
The metric $h$ is called $\varepsilon$-regular if for all the points $x$ in $M$, $H(x)\leq \varepsilon.$
\\
\end{definition}

\begin{lemma}\label{lemmeexistregular}

Let $(M_0,h_0)$ be a closed Riemannian surface. 
Then for every  $\varepsilon>0 $, there exists a Riemannian   
metric $\bar{h}$ on $M_0$ conformal to $h_0$  such that
\begin{enumerate}
\item  $\area(M_0,\bar{h}) \leq \area(M_0,h_0);$
\item  $\bar{h}$ is $\varepsilon$-regular; 
\item  $L(M_0,\bar{h})=L(M_0,h_0)$;
\item  $\sys(M_0,\bar{h})=\sys(M_0,h_0).$
\end{enumerate}
\end{lemma}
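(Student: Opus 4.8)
The plan is to produce $\bar h$ by a Gromov-style conformal regularization: we inflate the metric $h_0$ inside a small neighborhood of every point where the height $H$ is too large, being careful that the inflation factor is everywhere $\geq 1$ (so that $L$ and $\sys$ cannot decrease) while the total extra area stays controlled (so that (1) can be salvaged after a global normalization). Concretely, I would fix $\varepsilon>0$ and let $E_\varepsilon = \{x\in M_0 : H(x) > \varepsilon/2\}$. Using Proposition \ref{smallballsvolume} together with Proposition \ref{areaballsgromov}, a point with large height forces a definite amount of area in a small ball around it; since $\area(M_0,h_0)$ is finite this already limits how much of $M_0$ can have large height, and more importantly it says that if we conformally scale up a ball $B(x,\rho)$ by a bounded factor we increase $H$ at the center by a controlled amount while only spending area comparable to $\rho^2$. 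The key point is that both $H'$ and $H''$ are, by their definitions as infima of lengths, \emph{monotone under pointwise conformal enlargement}: if $\bar h = e^{2\varphi}h_0$ with $\varphi \geq 0$ then every curve and every capturing graph is at least as long in $\bar h$ as in $h_0$, so $\tens$, $L(M,x)$ and $L(M,h)$ all behave compatibly — in fact $H''$ is the difference of an infimum over graphs through $x$ and an infimum over all graphs, and enlarging near $x$ raises the former at least as fast as the latter.

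The construction I would carry out is iterative/limiting. Choose a maximal $\rho$-separated net in $M_0$ for a small $\rho=\rho(\varepsilon,h_0)$ to be fixed later, and on each net ball $B(x_i,\rho)$ multiply the metric by a bump function that is $\equiv \mu$ (some fixed constant $>1$, chosen so that Proposition \ref{areaballsgromov} kicks in: scaling by $\mu$ increases $2R-H'$-type quantities enough to make $H \leq \varepsilon$ on that ball) in the middle and tapers to $1$ near $\partial B(x_i,\rho)$, keeping $\varphi\geq 0$ everywhere. Since $\varphi\geq 0$ we immediately get $L(M_0,\bar h)\geq L(M_0,h_0)$ and $\sys(M_0,\bar h)\geq\sys(M_0,h_0)$; for the reverse inequalities in (3) and (4) one argues that the minimal capturing graph and the systolic loop of $(M_0,h_0)$ can be taken to avoid, or to pass only negligibly through, the enlarged regions — or more robustly, one first runs the argument with $\mu$ depending on the net ball so that $\varphi$ is supported where $H>\varepsilon/2$, and one notes that on such balls $H'$ or $H''$ is \emph{already} large, hence the enlargement there does not change the global minimizers' lengths. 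This gives equalities $L(M_0,\bar h)=L(M_0,h_0)$ and $\sys(M_0,\bar h)=\sys(M_0,h_0)$. Property (2), $\varepsilon$-regularity, is exactly what the enlargement was designed to force, using the quantitative lower bounds of Propositions \ref{areaballsgromov} and \ref{smallballsvolume} at each point together with the fact that $H$ is continuous and the net is $\rho$-fine.

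Finally, for the area bound (1): the enlargement adds area $\lesssim \mu^2 \rho^2$ per net ball and there are $\lesssim \area(M_0,h_0)/\rho^2$ balls that genuinely needed enlarging, so the new area is at most $(1+K)\area(M_0,h_0)$ for a constant $K$ independent of $\rho$ — which is not good enough by itself. The fix is the standard one: first \emph{shrink} $h_0$ globally by a constant factor $t<1$ to create room, then do the local enlargement; because $\sys$, $L$, $L(M,x)$, $H$ all scale linearly while area scales quadratically, choosing $t$ and $\rho$ appropriately (shrink, then inflate only on a controlled fraction of the surface, then possibly renormalize once more) yields $\area(M_0,\bar h)\leq\area(M_0,h_0)$ together with the other three properties; here one uses that shrinking by $t$ turns the hypothesis-to-be-achieved $H\leq\varepsilon$ into $H\leq t\varepsilon\leq\varepsilon$, which is still fine. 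The main obstacle is precisely this simultaneous bookkeeping — guaranteeing $\varepsilon$-regularity \emph{everywhere} while keeping the area from growing and while not disturbing the global invariants $L$ and $\sys$ — and it is handled by the scaling trick plus the observation that conformal enlargement with $\varphi\geq 0$ only ever helps the monotone quantities $L$ and $\sys$ in one direction, the other direction being free on the regions we touch because those are exactly the high-$H$ regions.
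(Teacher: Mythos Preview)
Your approach has the direction of the conformal change backwards, and this is fatal for property~(2). Both $H'(x)$ and $H''(x)$ measure an \emph{excess length} incurred by forcing a loop or a capturing graph to pass through~$x$: $H'(x)=\inf_{\gamma\ni x}\bigl(\length(\gamma)-\inf_{\beta\sim\gamma}\length(\beta)\bigr)$ and $H''(x)=L(M,x)-L(M,h)$. If you conformally \emph{enlarge} the metric near~$x$ (keeping $\varphi\geq 0$, supported near~$x$), then every curve or graph through~$x$ becomes longer, while the unconstrained minimizers --- the free-homotopy minimizer $\beta$ and the global minimal capturing graph --- can simply avoid the inflated region and keep their old length. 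Hence both differences go \emph{up}, not down: inflation at a high-$H$ point makes $H$ larger there, not smaller. Your own sentence ``enlarging near $x$ raises the former at least as fast as the latter'' says exactly this, and it is the opposite of what you need for $\varepsilon$-regularity. The subsequent shrink-then-inflate bookkeeping cannot repair this, since the local inflation step never produces the inequality $H\leq\varepsilon$ you are chasing.

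The paper's proof does the opposite: it \emph{collapses} the ball $B^+(x_0,\varepsilon)$ to a point whenever $H(x_0)>\varepsilon$, and iterates. Collapsing makes passage through the region free, so $H$ drops; it also automatically decreases area, so (1) is immediate with no scaling gymnastics. Termination after finitely many steps comes from a packing argument (each collapse strictly decreases the maximal number of disjoint $\varepsilon/3$-balls). Your instinct for (3) and (4) is the right one and is exactly what the paper uses: every point in the $\varepsilon$-neighborhood of a minimal capturing graph~$\Gamma$ (resp.\ a systolic loop) already satisfies $H\leq\varepsilon$, because one can append a short segment to~$\Gamma$ to reach it; consequently the collapsed balls never meet~$\Gamma$ or the systolic loop, and $L$ and $\sys$ are untouched.
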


\begin{proof}
Take a point $x_0$ in $M_0$ where $H(x_0)=H_{h_0}(x_0)> \varepsilon$  
and denote by $M_1$  the space $M_0/B^+$ obtained by collapsing  $B^+=B^+(x_0,\varepsilon$) to $x_0$.
Let $p_0: M_0\to M_1$ be the (non-expanding) canonical projection 
and  $h_1$ be the metric induced by $h$ on $M_1$. 
The Riemannian surface $(M_1,h_1)$ clearly satisfies $(1)$. 
If $h_1$ is not $\varepsilon$-regular, we apply the same process. 
By induction we construct a sequence of :
\begin{itemize}
\item balls $B^+_i=B^+(x_i,\varepsilon)$ in $M_i$, where  $x_i$ is a point with $H_{h_i}(x_i)> \varepsilon$.
\item Riemannian surfaces $(M_i,h_i)$ where 
$M_i=M_{i-1}/B_{i-1}$ and $h_i$ is the metric induced by $h_{i-1}$ on $M_i$.
\item non-expanding canonical projections $p_i:M_i\to M_{i+1}$.
\end{itemize}
This process stops when we get an $\varepsilon$-regular metric.
\\

Now,  we argue exactly as  \cite[Lemma 4.2]{YS} to prove that this process stops after finitely many steps.
Let $B_1^i,\dots,B_{N_i}^i$ be a maximal system of disjoint
balls of radius $r/3$ in $M_i$.
Since $p_{i-1}$ is non-expanding, the preimage $p_{i-1}^{-1}(B_k^i)$
of $B_k^i$ contains a ball of radius $r/3$ in $M_{i-1}$.
Furthermore, the preimage $p_{i-1}^{-1}(x_i)$ of $x_i$
contains a ball $B_{i-1}$ of radius $r$ in $M_{i-1}$.
Thus, two balls of radius $r/3$ lie in the preimage of $x_i$
under $p_{i-1}$.
It is then possible to construct a system of $N_i+1$ disjoint
disks
of radius $r/3$ in $M_{i-1}$.
Thus, $N_{i-1} \geq N_i +1$ where $N_i$ is the maximal number
of disjoint balls of radius $r/3$ in $M_i$.
Therefore, the process stops after $N$ steps with $N \leq
N_0$. Denote by $h_N$ the metric where this process stops. Clearly $h_N$ satisfies (1) and~(2).
To see that $h_N$ satisfies $(3)$ and $(4)$, 
let $\Gamma$  be a minimal graph in $M_0$ and $\alpha$ be a systolic loop in $M$. 
For every  point $x$ in  the $\varepsilon$-neighborhood $N_{\Gamma}$ of $\Gamma$,
we have $H(x)\leq \varepsilon$. 
Indeed, let $c$ be a minimizing curve from $\Gamma$ to $x$. 
The graph $\Gamma\cup c$ captures the topology of $M_0$ and  passes through~$x$. 
So $H''(x)\leq \length(\Gamma\cup c)-L(M,h)\leq \varepsilon$. 
That means that the balls we collapsed  through the whole process do not intersect $\Gamma$.
Therefore, the metric $h_N$ satisfies $(3)$. A similar argument holds for $\alpha$. 
So the metric $h_N$ also satisfies  $(4)$.

\end{proof}

\section{Construction of short minimal graphs on surfaces}\label{secmini}
In this section, we combine  Lemma \ref{lemmeexistregular} and the construction of \cite[p. 46]{BPS} to 
construct a minimal graph with controlled length  on a given Riemannian surface.

\begin{proposition}\label{lengthofminimalgraph}
Let $(M,h)$ be a closed orientable Riemannian surface of genus $g \geq 2$.
Suppose that
\begin{itemize}
\item $\area(M,h) \leq \frac{1}{2^{12}}(2g-1)$;
\item $\sys(M,h) \geq \frac{1}{2}$.
\end{itemize}
Then 
$$L(M,h)\leq \frac{1}{2}(2g-1).$$
\end{proposition}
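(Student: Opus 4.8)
The goal is an upper bound $L(M,h) \le \frac{1}{2}(2g-1)$ on the minimal length of a graph in $M$ capturing its topology, given a small-area and systole-lower-bound hypothesis. The plan is to use the $\varepsilon$-regularization of Lemma~\ref{lemmeexistregular} together with a ball-packing / covering argument in the spirit of \cite{BPS}. First I would fix $\varepsilon = \frac{1}{4}$ (or some explicit fraction of $\sys(M,h)$), and apply Lemma~\ref{lemmeexistregular} to replace $h$ by a conformal metric $\bar h$ that is $\varepsilon$-regular, has no larger area, and has the same $L$ and same systole. Since $L(M,\bar h) = L(M,h)$ and $\sys(M,\bar h)=\sys(M,h)\ge \frac12$, it suffices to bound $L(M,\bar h)$, so from now on I may assume $h$ itself is $\varepsilon$-regular with $\varepsilon$ small compared to $\sys(M,h)/2$.

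Next, the construction of \cite[p.~46]{BPS}: choose a maximal collection of disjoint balls $B(x_1,\varepsilon),\dots,B(x_N,\varepsilon)$ in $(M,h)$. Maximality means the balls $B(x_i,2\varepsilon)$ cover $M$. Because each ball $B(x_i,\varepsilon)$ has area at least $\frac12(2\varepsilon - H(x_i))^2$ by Proposition~\ref{smallballsvolume} (using $H(x_i)\le\varepsilon < \varepsilon < \frac12\sys(M,x_i)$, valid since $2\varepsilon$ lies in the required interval after checking the inequalities carefully), and these are disjoint, we get $N \cdot \frac12(2\varepsilon-\varepsilon)^2 = N\varepsilon^2/2 \le \area(M,h)$, hence $N \le 2\area(M,h)/\varepsilon^2$. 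With $\varepsilon=\frac14$ this gives $N \le 32\,\area(M,h) \le 32\cdot\frac{1}{2^{12}}(2g-1) = \frac{1}{128}(2g-1)$. Now build a graph: take the $1$-skeleton of the nerve-like structure — for each pair $i,j$ with $B(x_i,2\varepsilon)\cap B(x_j,2\varepsilon)\ne\emptyset$ join $x_i$ to $x_j$ by a minimizing geodesic of length $\le 4\varepsilon$. Since the $2\varepsilon$-balls cover the connected surface $M$, the resulting graph $\Gamma_0$ is connected, and since every loop of $M$ can be homotoped into this union of balls and then pushed onto $\Gamma_0$, the inclusion induces a surjection on $H_1(\cdot,\R)$, i.e.\ $\Gamma_0$ captures the topology. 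Then $L(M,h) \le \length(\Gamma_0) \le (\text{number of edges})\cdot 4\varepsilon$.

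The final step is to bound the number of edges. A graph capturing topology can be trimmed (remove degree-one vertices, ignore degree-two vertices) to an at-least-trivalent graph of first Betti number exactly $2g$ without increasing length, so by the estimate in Section~\ref{secpreliminaries} the reduced graph has at most $3(2g)-3 = 6g-3$ edges; but I actually want to bound the length directly, so instead I would observe that $\length(\Gamma_0) \le$ (number of edges of $\Gamma_0$)$\cdot 4\varepsilon$ where the number of edges is at most $\binom{N}{2}$ — this quadratic bound is too weak. The cleaner route, and the one I expect to be the main obstacle, is to control edges linearly: bound the degree of each $x_i$ in $\Gamma_0$ by a universal constant using a packing argument (only boundedly many $x_j$ have $B(x_j,\varepsilon)$ disjoint from $B(x_i,\varepsilon)$ yet $B(x_j,2\varepsilon)$ meeting $B(x_i,2\varepsilon)$, since all such $x_j$ lie in $B(x_i,5\varepsilon)$ and their $\varepsilon$-balls are disjoint, hence there are at most $\area B(x_i,5\varepsilon)/(\text{min }\varepsilon\text{-ball area})$ of them — but this needs a volume \emph{upper} bound on $B(x_i,5\varepsilon)$, which one does not have a priori). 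This is the delicate point: one must instead spanning-tree the nerve or argue that after trimming to Betti number $2g$ the graph has $O(g)$ edges each of length $O(\varepsilon)$, giving $L(M,h) = O(g\varepsilon)$, and then choose $\varepsilon$ and track the explicit constants in the hypothesis $\area \le 2^{-12}(2g-1)$ so that the final bound comes out as exactly $\frac12(2g-1)$. I would organize the argument so that the trimmed graph has at most $6g-3$ edges and use a refined length estimate (each retained edge bounded using the covering radius together with $N \le \frac{1}{128}(2g-1)$) to push the total under $\frac12(2g-1)$, carefully verifying that all radii used stay within the interval $]H(x),\tfrac12\sys(M,x)[$ demanded by Proposition~\ref{smallballsvolume}.
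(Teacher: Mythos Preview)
Your overall strategy matches the paper's: regularize to an $\varepsilon$-regular metric, take a maximal packing of $r_0$-balls, bound their number $N$ by the area, build the nerve graph, show it captures the topology, and then bound its length. You also correctly identify the edge-counting step as the crux. However, you do not actually resolve it, and your proposed workarounds all fail.

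The quadratic bound $\binom{N}{2}$ is, as you note, useless. The degree bound via packing in $B(x_i,5\varepsilon)$ needs an \emph{upper} area bound on that ball, which you do not have. And the route ``trim to an at-least-trivalent graph of Betti number $2g$, hence $\le 6g-3$ edges'' destroys the very edge-length control you need: once you delete degree-one leaves and collapse degree-two vertices, a single new edge may be a concatenation of many old edges, so the bound ``each edge $\le 4\varepsilon$'' is lost and you cannot recover a linear-in-$g$ length bound this way.

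The paper's missing idea is a one-line use of Euler's formula on the \emph{un}trimmed nerve. The abstract nerve $\Gamma$ is connected with $v=N$ vertices, $e$ edges, Betti number $b=e-v+1$. Passing to a minimal connected subgraph $\Gamma'$ that still surjects on $H_1(M;\R)$ removes at least $b-2g$ edges (since $\Gamma'$ has Betti number exactly $2g$), so $\Gamma'$ has at most
\[
e-(b-2g)=e-b+2g=(v-1)+2g=N-1+2g
\]
edges. Each edge has length at most $4r_0+2\varepsilon<\tfrac14$ (in fact the paper assigns each abstract edge length exactly $\tfrac14$ and uses that the map to $M$ is distance-nonincreasing), giving
\[
\length(\Gamma')\le \tfrac14(N-1+2g).
\]
With $r_0=2^{-5}$ (not $\tfrac14$; your choice is too large for the systole inequalities needed in the $\pi_1$-surjectivity step and for Proposition~\ref{smallballsvolume} to apply), the area lower bound $\frac14 r_0^2$ per ball gives $N\le 2^{12}\,\area(M,\bar h)\le 2g-1$, and the displayed bound becomes exactly $\tfrac12(2g-1)$. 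So the point is not to make the graph trivalent, but to use $e'-v'+1=2g$ directly to bound $e'$ in terms of the number of packed balls plus $2g$.
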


\begin{proof}

Fix $r_0=\frac{1}{2^5}$.
By Lemma \ref{lemmeexistregular} (choose $\varepsilon$ small enough) and Proposition \ref{smallballsvolume}, 
there exists a conformal Riemannian metric $\bar{h}$ on $M$
that satisfies
\begin{enumerate}
\item The area of every disk of $(M,\bar{h})$ of radius $r_0$ is at least $\frac{1}{4}{r_0}^2;$ 
\item $\area(M,\bar{h})\leq \area(M,h);$
\item $L(M,\bar{h})=L(M,h)$;
\item $\sys(M,\bar{h})=\sys(M,h)$
\item $\bar{h}$	is $\varepsilon$-regular.
\end{enumerate}
So it is sufficient to prove that 
$$L(M,\bar{h})\leq\frac{1}{2}(2g-1).$$
Let $\{B_i\}_{i \in I}$ be a maximal system of disjoint balls of radius $r_{0}$ in $(M,\bar{h})$.
Since the area of each ball $B_i$ is at least $\frac{1}{4}r_{0}^2$, then 
$$\frac{1}{4}|I|r_{0}^2 \leq \area(M,\bar{h}),$$
that is,
\begin{eqnarray}
|I|  \leq  2^{12}\area(M,\bar{h}).
\end{eqnarray}

\noindent As this system is maximal, the balls $2B_{i}$ of radius $2r_{0}$ 
with the same centers $p_{i}$ as $B_{i}$ cover $M$. 
\\
\indent 
Let $\varepsilon$ be a small positive constant that satisfies 
$$4r_{0}+2\varepsilon < \frac{1}{4} \leq \frac{\sys(M,\bar{h})}{2},$$
and denote by $2B_{i}+\varepsilon$  the balls centered at $p_{i}$ with radius $2r_{0}+\varepsilon$.
We construct an abstract graph $\Gamma$ as follows. Let   $\{w_{i}\}_{i \in I}$ be a set of vertices 
corresponding to $\{p_{i}\}_{i \in I}$.  
Two vertices  $w_{i}$ and $w_{i'}$ of $\Gamma$ are linked by an edge if and only if 
the balls $2B_{i}+\varepsilon$ and $2B_{i'}+\varepsilon$ intersect each other.
Define a metric on $\Gamma$  such that the length of each edge is $\frac{1}{4}$ and let 
$\varphi:\Gamma \to M$ be the map that sends each edge of  $\Gamma$ with endpoints $w_i$ and $w_{i'}$ to a minimizing 
geodesic joining $p_i$ and~$p_{i'}$. 
Since  
$\dist(p_i,p_{i'})\leq 4r_{0}+2\varepsilon < \frac{1}{4}$, the map $\varphi$ is distance nonincreasing.
\\
\\
\textbf{Claim}. The  map $  {\varphi}_{\star}: \pi_{1}(\Gamma) \to \pi_{1}(M)$
induced by $\varphi$ between the fundamental groups is an epimorphism.
In particular, it induces an epimorphism in real homology.
\\
\\
We argue exactly as \cite[Lemma 2.10]{BPS}.
Consider  a geodesic loop $\sigma$ of $M$.
Divide the loop $\sigma$ into segments $\sigma_{1},\ldots, \sigma_{n}$ of length at most~$\varepsilon$.
Denote by $x_{k}$ and~$x_{k+1}$ the endpoints of $\sigma_{k}$ with the convention $x_{n+1}=x_{1}$.
Recall that the balls $2B_{i}$ cover the surface $M$. So every point $x_{k}$ is at distance 
at most $2r_{0}$ from a point $v_{k}$ among the centers $p_{i}$.
Let $\beta_k$ be the loop
$$
\sigma_k \cup C_{x_{k+1}v_{k+1}} \cup C_{v_{k+1},v_k} \cup C_{v_k,x_k},
$$
where $C_{ab}$ denotes a minimizing geodesic  joining $a$ to $b$. 
We have that 
$$
\length(\beta_k) \leq 2 \, (4 r_{0} + \varepsilon) < \sys(M,\bar{h}).
$$ 
That means that the loops $\beta_k$ are contractible.
We conclude that the loop $\sigma$ is homotopic to a piecewise geodesic loop $\sigma'= (v_1, \dots, v_{n})$.
\\
The distance between the centers $v_{k}=p_{i_{k}}$ and $v_{k+1}=p_{i_{k+1}}$ 
is less than or equal to $4r_{0}+\varepsilon$.
So  the vertices $w_{i_{k}}$ and $w_{i_{k+1}}$ of $\Gamma$ corresponding 
to the vertices $p_{i_{k}}$ and $p_{i_{k+1}}$ are connected by an edge.
The union of these edges forms a loop $(w_{i_{1}}, \ldots, w_{i_{n}})$ 
in $\Gamma$ whose image by the map $\varphi$ is $\sigma'$. 
Since $\sigma'$  is homotopic to  $\sigma$,
the claim is proved.
\\
\\
Now we consider a connected subgraph $\Gamma'$ of $\Gamma$ with a
minimal number of edges such that the restriction of $\varphi$ 
to $\Gamma'$ still induces an epimorphism in real homology.
\\

We claim that the epimorphism $\varphi_\ast:H_{1}(\Gamma';\R) \to H_{1}(M;\R)$  is an isomorphism. 
Indeed, if $\varphi_{\ast}$ is not an isomorphism then  
arguing as in Proposition \ref{bettigenus} we can remove at least one edge 
of $\Gamma'$ such that $\varphi_{\ast}$ is still an epimorphism, 
which is impossible by the definition of $\Gamma'$.
\\
We denote by $v,e,b$ and $b'$ respectively the number of vertices of 
$\Gamma$, the number of edges of $\Gamma$, the first Betti number of $\Gamma$ and the first Betti number of $\Gamma'$.
At least $b-b'$ edges were removed from $\Gamma$ to obtain $\Gamma'$.
As $b'=2g$, we derive
\begin{eqnarray}
\length(\Gamma') & \leq & \length(\Gamma) - (b-b') \, \frac{1}{4} \nonumber \\
 & \leq & (e-b+2g) \, \frac{1}{4} \nonumber \\
 & \leq & (v-1+2g) \, \frac{1}{4}. 
\end{eqnarray}
Recall that  
$\area(M,\bar{h})\leq \frac{1}{2^{12}}(2g-1)$. 
So 
$$v=|I|\leq 2g-1.$$
Combining this  with $(6.2)$, we get
$$\length(\Gamma')\leq \frac{1}{2}(2g-1).$$
Since $\varphi$ is distance non-increasing then
$$\length(\varphi(\Gamma'))\leq \length(\Gamma').$$

The image by   $\varphi$ of two edges of $\Gamma'$ may intersect. 
If it is the case then the intersection point should be considered as a vertex of the 
graph $\varphi(\Gamma')$. Thus the set of vertices of $\varphi(\Gamma')$ may be bigger 
than the set of vertices of $\Gamma'$.
\\

Finally let  $j$ be the inclusion map $j:\varphi(\Gamma') \hookrightarrow M$. 
Clearly the map $j_\ast:H_{1}(\varphi(\Gamma');\R) \to H_{1}(M;\R)$ is an epimorphism. 
So $\varphi(\Gamma')$ is a graph in $M$
that captures its topology. 
Thus 
$$L(M,\bar{h})\leq \length(\varphi(\Gamma')) \leq \frac{1}{2}(2g-1).$$

\end{proof}

\section{Proofs of Theorem \ref{maintheorem1} and Theorem \ref{maintheorem2}.}\label{proofofmaintheorems}

In this section, we prove  Theorem \ref{maintheorem1} and Theorem \ref{maintheorem2}. But before doing that we examine how 
the function $V$ changes with scaling.
Let $(M^n,h)$ be a closed $n$-dimensional Riemannian manifold and $h'=\lambda^2h$ with $\lambda>0$ then 
\begin{itemize}
\item $\vol(M,h')=\lambda^n \vol(M,hyp)$;
\item $V_{(\Mtil,\hprimetil)}(\lambda R)=\lambda^n V_{(\Mtil,\htil)}(R)$.
\end{itemize}

\vspace{5mm}

The expression (1.1) of $V_{\mathbb{H}^2}$ immediately leads to the following lemma.

\begin{lemma}\label{lemmascal}
Let $a$ be a positive constant. There exists a constant $c=c(a)$ such that 
for all $R\geq 0$,
$$a V_{\mathbb{H}^2}(R)\geq  V_{\mathbb{H}^2}(Rc).$$
\end{lemma}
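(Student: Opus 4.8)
The plan is to unwind what the inequality $aV_{\mathbb{H}^2}(R)\geq V_{\mathbb{H}^2}(Rc)$ is actually asking, using the explicit formula $V_{\mathbb{H}^2}(R)=2\pi(\cosh R-1)$ from (1.1). After dividing by $2\pi$, we must find $c=c(a)>0$ so that
\[
a(\cosh R-1)\geq \cosh(cR)-1 \qquad\text{for all }R\geq 0.
\]
If $a\geq 1$ we may simply take $c=1$, so assume $0<a<1$ and look for $c\in(0,1)$. First I would treat the two natural regimes separately. For $R$ bounded away from $0$, one uses $\cosh R-1\geq \frac12\cosh R$ once $R$ is large enough, so it suffices to have $\tfrac{a}{2}\cosh R\geq \cosh(cR)$; since $\cosh$ is increasing and $\cosh(cR)\leq (\cosh R)^c$ is false in general, instead compare growth rates: $\cosh(cR)\sim \tfrac12 e^{cR}$ and $a(\cosh R-1)\sim \tfrac a2 e^{R}$, so any $c<1$ works for $R\geq R_1(a)$ with $R_1$ depending only on $a$.

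For the remaining compact interval $R\in[0,R_1]$, I would use the Taylor expansion $\cosh t-1=\tfrac{t^2}{2}+O(t^4)$, which gives $\cosh R-1\geq \tfrac{R^2}{2}$ for all $R$, and $\cosh(cR)-1\leq \tfrac{c^2R^2}{2}e^{cR_1}$ (using a crude bound on the remainder valid on the bounded interval, e.g. $\cosh t-1\leq \tfrac{t^2}{2}\cosh t\le \tfrac{t^2}{2}\cosh R_1$ for $t\le R_1$). Hence on $[0,R_1]$ the inequality holds as soon as $c^2\cosh(R_1)\leq a$, i.e. $c\leq \sqrt{a/\cosh R_1}$. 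Taking $c=c(a)$ to be the minimum of this value and any fixed $c_0<1$ that works on $[R_1,\infty)$ then settles both regimes simultaneously.

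A cleaner alternative, which I would probably prefer to write down, avoids splitting into cases: define $\phi(R)=\dfrac{\cosh(cR)-1}{\cosh R-1}$ for $R>0$ and note $\phi(0^+)=c^2$ (by the Taylor expansion) while $\lim_{R\to\infty}\phi(R)=0$ for any $c\in(0,1)$. Since $\phi$ is continuous and positive on $(0,\infty)$ with finite positive limit at $0$ and limit $0$ at infinity, it is bounded; so $\sup_{R>0}\phi(R)<\infty$, and one checks this supremum tends to $0$ as $c\to 0^+$ — indeed $\phi(R)\leq \max\{c^2,\; \text{(the large-}R\text{ bound)}\}$ can be made $\leq a$ by choosing $c$ small. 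Then $c=c(a)$ can be taken as any value small enough that $\sup_{R>0}\phi(R)\leq a$.

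The only mildly delicate point — the "main obstacle", though it is minor — is making the dependence of the threshold $R_1$ and of the remainder estimate explicit enough that the chosen $c$ genuinely depends only on $a$ and not on $R$; this is handled by the uniform bounds $\cosh R-1\ge \frac{R^2}{2}$ and $\cosh R - 1 \ge \frac12 e^{R}-1$ quoted above, both of which are elementary. No topology or geometry enters here: this lemma is purely a calculus fact about the function $\cosh$, inserted so that the scaling relation $V_{(\Mtil,\hprimetil)}(\lambda R)=\lambda^2 V_{(\Mtil,\htil)}(R)$ can later be matched against the hyperbolic comparison function.
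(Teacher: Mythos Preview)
Your argument is correct. The paper itself gives no proof of this lemma: it simply remarks that the expression $V_{\mathbb{H}^2}(R)=2\pi(\cosh R-1)$ from~(1.1) ``immediately leads'' to the statement, so what you have written supplies the details the paper omits.

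Your two-regime argument is fine, though the sentence ``$R_1$ depending only on $a$'' is slightly imprecise: $R_1$ also depends on the preliminary choice of $c_0<1$, but since $c_0$ can be fixed once and for all (say $c_0=\tfrac12$) before $a$ is given, and since shrinking $c$ only helps, the logic goes through. Your second approach via $\phi(R)=\dfrac{\cosh(cR)-1}{\cosh R-1}$ is cleaner, and in fact one can compute $\sup_{R>0}\phi(R)$ exactly rather than just bound it. Using the identity $\cosh t-1=2\sinh^2(t/2)$, the desired inequality is equivalent to
\[
\sqrt{a}\,\sinh\!\left(\tfrac{R}{2}\right)\ \geq\ \sinh\!\left(\tfrac{cR}{2}\right)\qquad(R\geq 0).
\]
For $0<c<1$ the ratio $\psi(x)=\sinh(cx)/\sinh(x)$ is strictly decreasing on $(0,\infty)$, since
\[
\psi'(x)\,\sinh^2 x \;=\; c\cosh(cx)\sinh x-\sinh(cx)\cosh x
\]
has derivative $(c^2-1)\sinh(cx)\sinh x<0$ and vanishes at $x=0$; hence $\psi(x)\leq \lim_{x\to 0^+}\psi(x)=c$. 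Thus $\sup_{R>0}\phi(R)=c^2$, and the choice $c=\min\{1,\sqrt{a}\}$ works for every $R\geq 0$ with no case splitting at all.
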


\vspace{4mm}

In light of Lemma \ref{lemmascal},  the proof of Theorem \ref{maintheorem2} amounts to proving  the following result.

\begin{theorem}\label{mainthm}
Let $(M,hyp)$ be a closed hyperbolic surface of genus $g$ and $h$ 
be another Riemannian metric on $M$ with 
$$\area(M,h)\leq \frac{1}{2^{13} \pi}\area (M,hyp).$$ 
Then, for any radius $R\geq 0$,
$$V_{(\Mtil,\htil)}(R)\geq \frac{1}{4\pi\ln2}V_{\mathbb{H}^2}(R\ln2).$$
In particular, there exists a constant $c$ such that 
$$V_{(\Mtil,\htil)}(R)\gtrsim c\,2^R,$$
when $R$ tends to infinity.
\\
\end{theorem}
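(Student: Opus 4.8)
The proof of Theorem \ref{mainthm} assembles the machinery of the previous sections. Fix a radius $R \geq 0$. The first reduction is to normalize the hyperbolic metric: by Gauss--Bonnet, $\area(M,hyp) = 4\pi(g-1)$, so after rescaling $h$ by a constant we may as well work with a metric $h$ satisfying $\area(M,h) \leq \frac{1}{2^{13}\pi}\cdot 4\pi(g-1) = \frac{1}{2^{11}}(g-1)$. We must be careful that $g \geq 2$, which holds since $M$ carries a hyperbolic metric. The strategy, as outlined in the introduction, is: (i) reduce to the case where the systole is bounded below; (ii) produce a short minimal graph $\Gamma$ on $M$; (iii) feed $\Gamma$ into Theorem \ref{maintheorem3} to get a ball in $\Gammatil$ of large length; (iv) transfer this length estimate to the area of a ball in $\Mtil$ via the minimality of $\Gamma$ and the coarea formula.

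First I would handle the systole. The key point is a scaling trick: if $\sys(M,h)$ is small, multiply $h$ by a large constant $\mu$ so that $\sys(M,\mu h) \geq \max\{2R, \tfrac12\}$; this multiplies the area by $\mu^2$, but since we want a lower bound on $V_{(\Mtil,\htil)}$ and the target $\frac{1}{4\pi\ln 2}V_{\mathbb H^2}(R\ln 2)$ also scales, one checks the inequality is preserved (or rather, one proves the scaled statement and unwinds using the scaling formulas for $V$ and for $V_{\mathbb H^2}$ from Lemma \ref{lemmascal}). Actually the cleanest route: it suffices to prove the theorem assuming $\sys(M,h) \geq \max\{2R,\tfrac12\}$, because given an arbitrary $h$ we rescale up, and one must verify the area hypothesis is not violated — here I'd note that if $\sys(M,h)$ is already $\geq 2R$ nothing is needed, and if not, the rescaling is by a bounded factor only when $R$ is bounded, so the honest argument must instead prove Theorem \ref{maintheorem2}'s scaled form directly for each $R$; I would follow whatever normalization the paper sets up in Section \ref{secmini}, invoking Proposition \ref{lengthofminimalgraph}. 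With $\sys(M,h) \geq \tfrac12$ and $\area(M,h) \leq \frac{1}{2^{12}}(2g-1)$ (which follows from $\frac{1}{2^{11}}(g-1) \leq \frac{1}{2^{12}}(2g-1)$), Proposition \ref{lengthofminimalgraph} gives a minimal graph $\Gamma$ in $M$ with $\length(\Gamma) = L(M,h) \leq \tfrac12(2g-1)$.

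Next, by Lemma \ref{bettigenus} I may take (a subgraph of) $\Gamma$ with first Betti number $b = 2g$, and then $\length(\Gamma) \leq \tfrac12(2g-1) = \tfrac12(b-1) = \tfrac16(3b-3) = \tfrac16\length(\Gamma_b,h_b)$. Since $\tfrac16 < \tfrac13$, Theorem \ref{maintheorem3} applies with $\lambda = \tfrac16$: there is a vertex $\tilde u$ in $\Gammatil$ with $\length B_{\htil}(\tilde u, r) \geq (1-\tfrac12)V'_{(\Gammatil_b,\htil_b)}(r) = \tfrac12 V'_{(\Gammatil_b,\htil_b)}(r) \geq \tfrac12\sinh(r\ln 2)$ for all $r \geq 0$, using the estimate (2.5). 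Now I transfer this to $M$: let $u$ be the projection of $\tilde u$ to $\Gamma \subset M$, and let $B_M(r)$ be the $r$-ball in $M$ centered at $u$. Because $r \leq R \leq \tfrac12\sys(\Gamma,h) \leq \tfrac12\sys(M,h)$ — here I use that the systole of $\Gamma$ with its induced length metric is at least that of $M$ — the ball $B_{\Gammatil}(\tilde u, r)$ projects injectively, so $\length(B_\Gamma(u,r)) = \length B_{\htil}(\tilde u,r) \geq \tfrac12\sinh(r\ln 2)$, and $B_\Gamma(u,r) \subseteq \Gamma \cap B_M(r)$. The minimality of $\Gamma$ then forces $\length(\partial B_M(r)) \geq \length(\Gamma \cap B_M(r))$: otherwise, replacing the part of $\Gamma$ inside $B_M(r)$ by an arc along $\partial B_M(r)$ would produce a shorter graph still capturing the topology (since $B_M(r)$ is contractible for $r$ below half the systole), contradicting $\length(\Gamma) = L(M,h)$. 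Hence $\length(\partial B_M(r)) \geq \tfrac12\sinh(r\ln 2)$ for all $r \in [0,R]$, and the coarea formula gives
\begin{eqnarray*}
\area B_M(R) \geq \int_0^R \length(\partial B_M(r))\,dr \geq \frac12\int_0^R \sinh(r\ln 2)\,dr = \frac{1}{2\ln 2}(\cosh(R\ln 2) - 1) = \frac{1}{4\pi\ln 2}V_{\mathbb H^2}(R\ln 2),
\end{eqnarray*}
using (1.1). Since $B_M(R)$ lifts isometrically to a ball in $\Mtil$ (again as $R$ is below half the systole), this bounds $V_{(\Mtil,\htil)}(R)$ from below, proving the theorem; the asymptotic $V_{(\Mtil,\htil)}(R) \gtrsim c\,2^R$ is then immediate from $\cosh(R\ln 2) \sim \tfrac12 2^R$.

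**The main obstacle.** The delicate step is the minimality argument showing $\length(\partial B_M(r)) \geq \length(\Gamma \cap B_M(r))$ — one must ensure that cutting $\Gamma$ at $\partial B_M(r)$ and rerouting along the boundary genuinely yields a graph capturing the topology and that the surgery does not accidentally increase length through some homotopy subtlety; this is exactly where the contractibility of $B_M(r)$ (from $r < \tfrac12\sys$) and the definition of "captures the topology" via $H_1$-epimorphism must be used carefully, and it is also where one must be careful that $B_\Gamma(u,r)$ — a ball in the intrinsic metric of $\Gamma$ — sits inside $\Gamma \cap B_M(r)$, which uses that the inclusion $\Gamma \hookrightarrow M$ is $1$-Lipschitz. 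The systole normalization in step one is the other point requiring care, since the target inequality and the area constraint both interact with rescaling and one must check no hypothesis is lost.
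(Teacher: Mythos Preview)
Your steps (ii)--(iv) match the paper's argument closely: the bound $\length(\Gamma)\leq \tfrac16\length(\Gamma_b,h_b)$ via Proposition~\ref{lengthofminimalgraph}, the application of Theorem~\ref{maintheorem3} with $\lambda=\tfrac16$, the minimality surgery to get $\length(\partial B^+(u,r))\geq \length(\Gamma\cap B^+(u,r))$, and the coarea integration are all exactly what the paper does (the paper works with $B^+$ rather than $B_M$, but this is a minor technical refinement).

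The genuine gap is your systole reduction, step (i). Your rescaling idea does not work, and you seem to sense this yourself. If $\sys(M,h)$ is tiny you would need $\mu$ enormous to achieve $\sys(M,\mu^2 h)\geq \max\{2R,\tfrac12\}$, and then $\area(M,\mu^2 h)=\mu^2\area(M,h)$ can be made arbitrarily large, destroying the hypothesis $\area\leq \frac{1}{2^{13}\pi}\area(M,hyp)$ needed to invoke Proposition~\ref{lengthofminimalgraph}. There is no way to salvage this by ``proving Theorem~\ref{maintheorem2}'s scaled form directly,'' because Proposition~\ref{lengthofminimalgraph} really does need both the systole lower bound and the area upper bound simultaneously.

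The paper handles this differently and cleanly: since $M$ carries a hyperbolic metric, $\pi_1(M)$ is residually finite, so one can pass to a \emph{finite cover} $(\bar M,\bar h)$ that is orientable and has $\sys(\bar M,\bar h)\geq \max\{2R,\tfrac12\}$. The point is that both $\area(\bar M,\bar h)$ and $\area(\bar M,\overline{hyp})$ scale by the same factor (the degree of the cover), so the area hypothesis is preserved exactly; and the universal cover of $\bar M$ agrees with that of $M$, so the conclusion transfers back. This also takes care of orientability, which your proposal does not address but which is needed for Proposition~\ref{lengthofminimalgraph}.
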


\begin{proof}
Let $R>0$. First, we consider the special case when $M$ is oriented and   
$$\sys(M,h)\geq  max\{2R,1/2\}.$$ 
In this case,
$$V_{(M,h)}(R)=V_{(\Mtil,\htil)}(R).$$ 
Let $\Gamma$ be a minimal  graph which captures the topology of  $(M,h)$ (\emph{cf}. Definition \ref{defminimal}). 
Denote by $b=2g$ the first  Betti number of $\Gamma$.
We have 
$$\area(M,h)\leq \frac{1}{2^{13}\pi}\area(M,hyp)\leq \frac{1}{2^{12}}(2g-1).$$
So by Proposition \ref{lengthofminimalgraph} and the relation $(2.3)$, we have  
\begin{eqnarray}
\length(\Gamma)\leq \frac{1}{2}(b-1)=\frac{1}{6}\length(\Gamma_b,h_b).
\end{eqnarray}
Let $v$ be any vertex of $\Gamma$. Denote by $B(v,R)$ 
the ball in $(M,h)$ centered at~$v$  with radius $R$.
We claim that  for all  $r \in  (0,R)$
\begin{eqnarray}
\length(\partial B^+(v,r)) \geq  
\length(\Gamma\cap B^+(v,r)),
\end{eqnarray}
where $B^+(v,r)$ is defined in Definition \ref{defcell}.
\\
We argue as in Proposition \ref{smallballsvolume}. 
Suppose the opposite and 
replace $\Gamma\cap B^+(v,r)$
by a minimal arc of $\partial B^+(v,r)$ that links the points
of $\Gamma\cap \partial B^+(v,r)$. Since $B^+(v,r)$ is contractible,
the new graph  captures the topology of $M$ and is shorter 
than $\Gamma$ which contradicts  the definition of $\Gamma$.
\\
\\
Let $B_{(\Gamma,h)}(v,r)$ be the ball centered at $v$  
of radius $R$ in the metric graph $(\Gamma,h)$.
Since the ball $B_{(\Gamma,h)}(v,r)$ 
is contained in $\Gamma\cap B^+(v,r)$, we have 
\begin{eqnarray}
\length(\Gamma\cap B^+(v,r))
\geq 
\length (B_{(\Gamma,h)}(v,r)).
\end{eqnarray}
Let $\tilde{v}$ be a lift of $v$ in $\Gammatil$.
Since $\sys(M,h)\leq \sys(\Gamma,h)$,
we have for $r \leq~\frac{1}{2}\sys(M,h)$ 
\begin{eqnarray}
\length (B_{(\Gamma,h)}(v,r))=\length (B_{(\Gammatil,\htil)}(\tilde{v},r)).
\end{eqnarray}
By Theorem \ref{maintheorem3} (take $\lambda= \frac{1}{6}$) and the bound $(9.1)$, 
there exists a vertex $\tilde{u}$ in $\Gammatil$ such that 
$$\length (B_{(\Gammatil,\htil)}(\tilde{u},r))
\geq
\frac{1}{2}V'_{(\Gammatil_{2g},\htil_{2g})}(r).$$
Denote by $u$ the image of $\tilde{u}$ by the covering map. 
By $(9.2)$, $(9.3)$, $(9.4)$ and $(2.4)$, we obtain
\begin{eqnarray}
\length(\partial B^+(u,r)) &\geq & 
\frac{1}{2} V'_{(\Gammatil_{2g},\htil_{2g})}(r) \nonumber \\
&\geq& \frac{1}{2}\sinh(r\ln 2).\nonumber
\end{eqnarray}
By the coarea formula,
\begin{eqnarray}
\area(B(u,R)) &\geq& \frac{1}{2}\int_0^R \sinh(r\ln 2) dr \nonumber \\
&=& \frac{1}{2\ln 2 }(\cosh(R\ln2)-1). \nonumber \\
&=& \frac{1}{4\pi \ln2}V_{\mathbb{H}^2}(R\ln2). \nonumber 
\end{eqnarray}

Next, we consider the general case  with no restriction on the systole and the orientability of $M$.
Since $M$ admits a hyperbolic metric, 
the fundamental group of $M$ is residually finite (see \cite{Ma}). 
Therefore, we can choose a finite cover  
$(\bar{M},\bar{h})$ such that $\bar{M}$ is orientable 
and $$\sys(\bar{M},\bar{h}) \geq  max\{2R,1/2\}.$$ 
Let $\bar{hyp}$  be the pullback of the hyperbolic  metric on $M$ to $\bar{M}$.  
\\
Now, if the covering $\pi : \bar{M} \to M$ has degree $d$, then
$\area(\bar{M},\bar{h})=d\area(M,h$) and 
$\area(\bar{M},\bar{hyp})=d\area(M,hyp)$. So

$$\area(\bar{M},\bar{h}) \leq \frac{1}{2^{13}\pi}\area(\bar{M},\bar{hyp}).$$ 
Finally, since the universal cover of $(\bar{M},\bar{h})$ 
agrees with the universal cover of $(M,h)$, we can conclude by the first case.
\end{proof}

Now we prove Theorem \ref{maintheorem1}.

\begin{proof}[Proof of Theorem \ref{maintheorem1}]
Let $(M,hyp)$ be a closed hyperbolic  Riemannian surface of genus $g$.
Let $\delta$ be a small positive constant and $h$ another metric on~$M$ with 
$\area(M,h)\leq \delta \area(M,hyp).$
We will show that  if we take $\delta$ small enough (independently
from the metric $h$) then for any radius $R\geq 1$,
$$V_{(\Mtil,\htil)}(R)\geq  V_{\mathbb{H}^2}(R).$$

Indeed, let  $h'=\lambda^2h$ where $\lambda$ is a positive constant such that
$$\area(M,h')=\frac{1}{2^{13} \pi}\area (M,hyp).$$
By Theorem \ref{mainthm}, we have that for any radius $R\geq 0$,
$$V_{(\Mtil,\hprimetil)}(R)\geq \frac{1}{4\pi\ln2}V_{\mathbb{H}^2}(R\ln2).$$
Recall that 
$$\area(M,h')=\lambda^2\area(M,h) \leq \lambda^2\delta \area(M,hyp).$$
So 
$$\lambda^2\geq \frac{1}{2^{13}\pi\delta}.$$
On the other hand, we have 
$$V_{(\Mtil, \htil')} (\lambda R)= \lambda^2 V_{(\Mtil, \htil)} (R).$$
So 
$$V_{(\Mtil, \htil)} (R) \geq \frac{1}{4\pi\lambda^2\ln2}   V_{\mathbb{H}^2}(\lambda R \ln2).$$
\\
Now we choose $\lambda$ large  enough so  that for all $R\geq 1$ we have 
$$  \frac{1}{4\pi\lambda^2\ln2}  V_{\mathbb{H}^2}(\lambda R\ln2) \geq V_{\mathbb{H}^2}(R).$$
\\
\\
To see that such a $\lambda$ exists notice that for $R\geq 1$ we have
$$ \frac{1}{4\pi\lambda^2\ln2}  V_{\mathbb{H}^2}(\lambda R\ln2)\geq 
\frac{1}{8\pi\lambda^2\ln2}(e^{\frac{\lambda \ln2}{2}}e^{\frac{\lambda R\ln2}{2}}-2).$$
\\ \noindent
When $\lambda$ tends to infinity, the number $\frac{1}{8\pi\lambda^2\ln2}e^{\frac{\lambda \ln2}{2}}$ tends to infinity and so 
$$\frac{1}{8\pi\lambda^2\ln2}(e^{\frac{\lambda \ln2}{2}}e^{\frac{\lambda R\ln2}{2}}-2)\gg V_{\mathbb{H}^n}(R).$$
\noindent
Recall that to get $\lambda$ large enough it suffices to choose  $\delta$ small enough.
\\ 
Finally, we would like to point out that when R tends to zero we cannot find a $\lambda$ such that 
$$  \frac{1}{4\pi\lambda^2\ln2}  V_{\mathbb{H}^2}(\lambda R\ln2) \geq V_{\mathbb{H}^2}(R).$$
\end{proof}

\end{document}